\documentclass[]{amsart}

\usepackage{hyperref}

\usepackage{color} 

\usepackage[initials,nobysame,msc-links]{amsrefs}
\DefineSimpleKey{bib}{how}
\renewcommand{\PrintDOI}[1]{%
    \href{http://dx.doi.org/#1}{{\tt DOI:#1}}%
}
\renewcommand{\eprint}[1]{#1}
\BibSpec{book}{%
    +{}  {\PrintPrimary}                {transition}
    +{.} { \PrintDate}                  {date}
    +{.} { \textit}                     {title}
    +{.} { }                            {part}
    +{:} { \textit}                     {subtitle}
    +{,} { \PrintEdition}               {edition}
    +{}  { \PrintEditorsB}              {editor}
    +{,} { \PrintTranslatorsC}          {translator}
    +{,} { \PrintContributions}         {contribution}
    +{,} { }                            {series}
    +{,} { \voltext}                    {volume}
    +{,} { }                            {publisher}
    +{,} { }                            {organization}
    +{,} { }                            {address}
    +{,} { }                            {status}
    +{,} { \PrintDOI}                   {doi}
    +{,} { \PrintISBNs}                 {isbn}
    +{}  { \parenthesize}               {language}
    +{}  { \PrintTranslation}           {translation}
    +{;} { \PrintReprint}               {reprint}
    +{.} { }                            {note}
    +{.} {}                             {transition}
    +{}  {\SentenceSpace \PrintReviews} {review}
}
\BibSpec{collection.article}{%
    +{}  {\PrintAuthors}                {author}
    +{,} { \textit}                     {title}
    +{.} { }                            {part}
    +{:} { \textit}                     {subtitle}
    +{,} { \PrintContributions}         {contribution}
    +{,} { \PrintConference}            {conference}
    +{}  {\PrintBook}                   {book}
    +{,} { }                            {booktitle}
    +{,} { \PrintDateB}                 {date}
    +{,} { pp.~}                        {pages}
    +{,} { }                            {publisher}
    +{,} { }                            {organization}
    +{,} { }                            {address}
    +{,} { }                            {status}
    +{,} { \PrintDOI}                   {doi}
    +{,} { \PrintISBNs}                 {isbn}
    +{,} { available at \eprint}        {eprint}
    +{}  { \parenthesize}               {language}
    +{}  { \PrintTranslation}           {translation}
    +{;} { \PrintReprint}               {reprint}
    +{.} { }                            {note}
    +{.} {}                             {transition}
    +{}  {\SentenceSpace \PrintReviews} {review}
}
\BibSpec{misc}{%
  +{}{\PrintAuthors}  {author}
  +{,}{ \textit}      {title}
  +{.}{ }             {how}
  +{}{ available at \eprint}{eprint}
  +{}{ available at \url}{url}
  +{, }{ \PrintDateB} {date}
  +{,}{ }             {note}
  +{.}{}              {transition}
}

\usepackage{amssymb}
\usepackage{MnSymbol,slashed}
\usepackage{amscd}  
\usepackage[all]{xy} 

\usepackage{amsthm}
\theoremstyle{plain}
\newtheorem{thm}[]{Theorem}
\newtheorem{prop}[]{Proposition}
\newtheorem{lem}[prop]{Lemma}
\newtheorem{cor}[prop]{Corollary}

\theoremstyle{definition}

\theoremstyle{remark}
\newtheorem{rmk}[prop]{Remark}
\newtheorem{example}[prop]{Example}

\mathchardef\mhyphen="2D

\newcommand{\ensemble}[1]{\left\{ #1 \right\}}
\newcommand{\suchthat}{\mid}
\newcommand{\norm}[1]{\left\| #1 \right\|}
\newcommand{\absolute}[1]{\left| #1 \right|}

\newcommand{\cptops}{\mathcal{K}}

\newcommand{\N}{\mathbb{N}}
\newcommand{\Z}{\mathbb{Z}}
\newcommand{\Q}{\mathbb{Q}}
\newcommand{\C}{\mathbb{C}}
\newcommand{\R}{\mathbb{R}}
\newcommand{\T}{\mathbb{T}}

\newcommand{\bimod}[1]{\mathcal{#1}}

\newcommand{\KK}{\mathit{KK}}
\newcommand{\RKK}{\mathcal{R}\KK}

\newcommand{\liealg}[1]{\mathfrak{#1}}
\newcommand{\Adj}{\mathrm{Ad}}

\newcommand{\UnivEnv}{\mathcal{U}}
\newcommand{\Cliff}{\mathrm{Cl}}

\newcommand{\Disc}{\mathbb{D}}
\newcommand{\Tpltz}{\mathcal{T}}

\newcommand{\Lef}{\mathrm{Lef}}
\newcommand{\even}{\mathrm{ev}}
\newcommand{\odd}{\mathrm{odd}}
\newcommand{\qgrp}[1]{\mathbb{#1}}
\newcommand{\DD}{\mathsf{D}}
\newcommand{\DG}{\DD(\qgrp{G})}
\newcommand{\Hmod}[1]{\mathcal{#1}}
\newcommand{\multAlg}{\mathcal{M}}
\newcommand{\ideal}[1]{\mathcal{#1}}

\DeclareMathOperator{\Ad}{Ad}
\DeclareMathOperator{\End}{End}
\DeclareMathOperator{\Hom}{Hom}

\DeclareMathOperator{\Img}{Img}

\DeclareMathOperator{\rk}{rk}
\DeclareMathOperator{\Ev}{Ev}
\DeclareMathOperator{\ev}{ev}
\DeclareMathOperator{\Id}{Id}

\DeclareMathOperator{\Ind}{Ind}
\DeclareMathOperator{\Res}{Res}
\DeclareMathOperator{\Lie}{Lie}
\DeclareMathOperator{\grtensor}{\hat{\otimes}}

\DeclareMathOperator{\HmodCpt}{\mathcal{K}}
\newcommand{\SU}{\mathrm{SU}}
\newcommand{\U}{\mathrm{U}}

\begin{document}

\title{Equivariant comparison of quantum homogeneous spaces}
\author{Makoto Yamashita}
\date{May 17, 2012} 
\address{Dipartimento di Matematica,
Universit\`{a} degli Studi di Roma ``Tor Vergata''\\
Via della Ricerca Scientifica 1, 00133 Rome, Italy}
\curraddr{Department of Mathematics,
Ochanomizu University\\
Ohtsuka 2-1-1, Bunkyo-ku, 112-8610 Tokyo, Japan}
\thanks{Supported in part by the ERC Advanced Grant 227458 OACFT ``Operator Algebras and Conformal Field Theory''}
\email{yamashit@mat.uniroma2.it}
\keywords{quantum group, quantum flag variety, KK-theory}
\subjclass[2010]{Primary 46L80; Secondary 20G42}
\begin{abstract}
 We prove the deformation invariance of the quantum homogeneous spaces of the $q$-deformation of simply connected simple compact Lie groups over the Poisson--Lie quantum subgroups, in the equivariant $\KK$-theory with respect to the translation action by maximal tori.  This extends a result of Neshveyev--Tuset to the equivariant setting.  As applications, we prove the ring isomorphism of the $K$-group of $G_q$ with respect to the coproduct of $C(G_q)$, and an analogue of the Borsuk--Ulam theorem for quantum spheres.
\end{abstract}

\maketitle

\section{Introduction}

After the seminal works of Woronowicz~\cite{MR901157} and Podle\'{s}~\cite{MR919322}, comodule algebras over quantum groups became known to be a rich class of `noncommutative spaces' in the study of operator algebras and noncommutative geometry.  One geometrically interesting generalization of their results is given by the $q$-deformation $G_q$ of a simply connected simple compact Lie group $G$, and the quantum homogeneous spaces of such quantum groups developed by Reshetikhin--Takhtadzhyan--Faddeev~\cite{MR1015339} and Vaksman--So{\u\i}bel{\cprime}man~\cite{MR1086447}.

The C$^*$-algebras obtained this way can be thought as continuous deformation of the commutative algebra of continuous functions on a homogeneous space of $G$, which is an ordinary compact Riemannian manifold.  Standing on this viewpoint, Nagy~\cite{MR1601237} took a $\KK$/$E$-categorical approach to compute the $K$-groups of $G_q$ by utilizing continuous fields of C$^*$-algebras over the parameter space with fiber $C(G_q)$.  His method was recently generalized to quantum homogeneous spaces over Poisson--Lie quantum subgroups $K^{S,L}_q$ by Neshveyev--Tuset~\cite{arXiv:1103.4346}.  The main idea is to reduce the comparison of $K$-groups to the case of deformation quantization of open discs which appear as the symplectic leaves in the homogeneous space~\cite{MR1614943}.

The categorical structure of comodule algebras over such quantum groups is also interesting in connection with the Baum--Connes problem of quantum groups due to Meyer--Nest~\citelist{\cite{MR2193334}\cite{MR2339371}}.  The analogue of the strong Baum--Connes conjecture for the dual of Hodgkin groups allows us to cast a new light on early results of Hodgkin~\cite{MR0478156}, McLeod~\cite{MR557175}, and Snaith~\cite{MR0309115} concerning the equivariant $K$-groups of $G$-spaces.  Then, it is natural to expect that the quantum homogeneous spaces are crucial in understanding of more general comodule algebras over $G_q$.  Indeed, the equivariant $\KK$-theory of the standard Podle\'{s} sphere plays a central role in the proof of the strong Baum--Connes conjecture for $\widehat{\SU}_q(2)$ by Voigt~\cite{arXiv:0911.2999}.

The main result of this paper is that the algebra $C(G_q/K^{S,L}_q)$ of the quantum homogeneous space is equivariantly $\KK$-equivalent to the classical one $C(G/K^{S,L})$ with respect to the translation action of the maximal torus (Theorem~\ref{thm:q-grp-bundle-eval-T-T-equivar-equiv}).  This extends a result of~\cite{arXiv:1103.4346} to the torus equivariant setting.  The proof occupies Section~\ref{sec:equivar-compar-quant-g-sp}, where we adapt the argument of~\cite{arXiv:1103.4346} to the equivariant case using the equivariant Universal Coefficient Theorem~\cite{MR849938} and the triangulated structure of the equivariant $\KK$-category~\cite{MR2193334}.

We observe two applications of our main theorem in Section~\ref{sec:applications}.  The first one concerns the ring structure of $K^*(C(G_q))$ with respect to the coproduct on $C(G_q)$ (Theorem~\ref{thm:ring-structure-G-q}).  By the continuous field comparison method, we prove that $K^*(C(G_q))$ is isomorphic to $K_*(G)$ as a ring.

The second application is an analogue of the Borsuk--Ulam theorem for the quantum spheres (Theorem~\ref{thm:quant-borsuk-ulam}), conjectured by Baum and Hajac~\cite{baum-hajac-galois-klein}.  The theorem states that there is no equivariant homomorphism from $C(S^n_{q})$ to $C(S^{n+1}_q)$ with respect to the antipodal action of $C_2 = \Z/2\Z$.  The proof, which is modelled on the argument of Casson--Gottlieb~\cite{MR0436144}, makes use of the Lefschetz number of equivariant $\KK$-morphisms.  The crucial point is that the antipodal action on a odd-dimensional sphere is given by a restriction of a $\U(1)$-action, which comes from the homogeneous space description of $S^n_q$ as $\SU_q(n)/\SU_q(n-1)$.

\paragraph{Acknowledgment} A major part of this research was carried out under the support of the Marie Curie Research Training Network MRTN-CT-2006-031962 in Noncommutative Geometry, EU-NCG.  The author thanks Sergey Neshveyev and Wojciech Szymanski for fruitful exchanges which were crucial to the development of the research.  This paper was written during the author's stay at the Mathematical Sciences Institute, Australian National University.  He would like to thank them, particularly Alan Carey and Adam Rennie, for their hospitality.  He is also grateful to Piotr M. Hajac, from whom he learned the Borsuk--Ulam problems during his lecture at the ANU in August 2011.  Finally, he would like to thank the referees for numerous suggestions which greatly improved the presentation of the paper, and also for pointing out a critical error in the early version of Section~4.1.

\section{Preliminaries}

Throughout the paper $G$ denotes a simply connected simple compact Lie group and $\liealg{g}$ denotes its Lie algebra.  We fix a maximal torus $T$ of $G$, and let $W$ denote the associated Weyl group.  We also fix a positive root system $P$ of $(G, T)$ and let $\Pi = \ensemble{\alpha_i \suchthat i = 1, \ldots, \rk{G}}$ denote the corresponding set of the simple roots.  The associated length function on $W$ is denoted by $l(w)$, and the longest element by $w_0$, whose length is denoted by $m_0$.  By abuse of the notation we sometimes use $\Pi$ as the index set $\ensemble{1, \ldots, \rk G}$ for the $\alpha_i$.  The Cartan matrix $2 (\alpha_i, \alpha_j)/(\alpha_i, \alpha_i)$ of $\liealg{g}$ is denoted by $(a_{i, j})_{i, j \in \Pi}$.  

When $A$ is a C$^*$-algebra, we let $\multAlg(A)$ denote its multiplier algebra.

If $A$ and $B$ are C$^*$-algebras, $A \otimes B$ denotes their minimal tensor product unless otherwise stated.  When $\Hmod{E}$ (resp. $\Hmod{F}$) is a right Hilbert $A$-module (resp. right Hilbert $B$-module), $\Hmod{E} \otimes \Hmod{F}$ denotes their tensor product as a right Hilbert $A \otimes B$-module.

The closed unit interval $[0, 1]$ is denoted by $I$.  We let $\N$ be the set of nonnegative integers and $\N_+ = \N \setminus \ensemble{0}$.  For each subset $X$ of $\N$, we let $\chi_X$ denote the orthogonal projection onto the subspace $\ell^2 X$ of $\ell^2 \N$.

When $A$ is a continuous field of C$^*$-algebras over $I$ (or the half open interval $(0, 1]$) whose fiber at $q \in I$ is $A_q$, and $X$ is a subset of $I$, we let $\Gamma_X(A_q)$ denote the C$^*$-algebra of the continuous sections over $X$.

\subsection{Noncommutative disks}

For $0 \le q < 1$, let $C(\overline{\Disc}_q)$ denote the C$^*$-algebra of noncommutative closed disc, which is the universal algebra generated by $Z_q$ satisfying
\begin{equation*}
1 - Z_q^* Z_q = q^2 (1 - Z_q Z_q^*).
\end{equation*}
Then $C(\overline{\Disc}_q)$ can be faithfully represented on $\ell^2 \N$ by setting
\begin{equation}
\label{eq:q-disc-Toeplitz-corr}
Z_q e_n = \sqrt{1 - q^{2(n+1)}} e_{n + 1} \quad (n \in \N).
\end{equation}
The algebra of operators generated by this representation is equal to the Toeplitz algebra $\Tpltz$ on $\ell^2 \N$ generated by the unilateral shift operator $S\colon e_n \mapsto e_{n + 1}$.  There is a homomorphism $C(\overline{\Disc}_q) \rightarrow C(S^1)$ given by $Z_q \mapsto z$.  The kernel $C_0(\Disc_q)$ of this homomorphism is isomorphic to the algebra $\mathcal{K}$ of the compact operators.  Under the identification $C(\overline{\Disc}_q) \simeq \Tpltz$, this corresponds to the standard extension $\cptops \rightarrow \Tpltz \rightarrow C(S^1)$ induced by the map $S \mapsto z$.

For $q = 1$, we formally put $C(\overline{\Disc}_1) = C(\overline{\Disc})$ and $C_0(\Disc_1) = C_0(\Disc)$, where $\overline{\Disc} = \ensemble{z \in \C \suchthat \absolute{z} \le 1}$ and $\Disc = \ensemble{z \in \C \suchthat \absolute{z} < 1}$.

\subsection{Algebras representing \texorpdfstring{$q$}{q}-deformations}

We briefly review the definition of the compact quantum group $G_q$ for $0 < q < 1$ and related constructs.  Overall we adopt the convention of Neshveyev--Tuset~\citelist{\cite{arXiv:0711.4302}\cite{arXiv:1103.4346}}.

Recall that the quantized universal enveloping algebra $\UnivEnv_q(\liealg{g})$ of $\liealg{g}$ is generated by the generators $E_i, F_i, K_i^{\pm 1}$ for $i \in \Pi$ subject to the relations
\begin{align}
\label{eq:q-env-def-torus-rel}
[K_i, K_j] &= 0, &
K_i E_j K_i^{-1} &= q_i^{a_{i, j}} E_j, &
K_i E_j K_i^{-1} &= q_i^{a_{i, j}} E_j
\end{align}
where $q_i = q^{(\alpha_i, \alpha_i)/2}$, and
\[
[E_i, F_j] = \delta_{i, j} \frac{K_i - K_i^{-1}}{q_i - q_i^{-1}},
\]
together with the $q$-analogues of the Serre relations.  It is a Hopf algebra by the coproduct $\hat{\Delta}_q$, whose values on the above generators are given by
\begin{align*}
\hat{\Delta}_q(K_i) &= K_i \otimes K_i,&
\hat{\Delta}_q(E_i) &= E_i \otimes 1 + K_i \otimes E_i,&
\hat{\Delta}_q(F_i) &= F_i \otimes K_i^{-1} + 1 \otimes F_i.
\end{align*}
There is a compatible $*$-algebra structure on $\UnivEnv_q(\liealg{g})$ defined by
\begin{align*}
K_i^* &= K_i, &
E_i^* &= F_i K_i,&
F_i^* &= K_i^{-1} E_i.
\end{align*}

Let $V$ be a representation of $\UnivEnv_q(\liealg{g})$.  For each weight $\lambda$, let $V(\lambda)$ denote the subspace of $V$ spanned by the vectors $v$ satisfying $K_i v = q^{\lambda(h_i)} v$ for any $i$.  Then, $V$ is said to be \textit{integrable} (\textit{admissible} in~\cite{arXiv:0711.4302}) if it is equal to the direct sum of the $V(\lambda)$ for $\lambda \in P$, and is spanned by its finite-dimensional subrepresentations.

Let $P_+$ be the set of dominant integral weights of $\liealg{g}$.  The finite-dimensional integrable irreducible representations of $\UnivEnv_q(\liealg{g})$ are classified by their highest weights, which are elements of $P_+$.  We let $V^{(q)}_\lambda$ be the irreducible representation with the highest weight $\lambda$.  Furthermore we let $\UnivEnv(G_q)$ be the algebra $\prod_{\lambda \in P_+} M(V_\lambda)$.  Then $\UnivEnv_q(\liealg{g})$ can be regarded as an appropriate dense subalgebra of $\UnivEnv(G_q)$.

The space $\C[G_q]$ of the matrix coefficients of finite-dimensional integrable representations of $\UnivEnv_q(\liealg{g})$ is a $*$-Hopf algebra with a unique faithful Haar state.  We let $(C(G_q), \Delta_q)$ denote its C$^*$-algebraic closure.  Its dual quantum group $(C^*(G_q), \hat{\Delta}_q)$ is spanned by the algebraic direct sum $\oplus_{\lambda \in P_+} M(V_\lambda)$ of matrix algebras.

Unless otherwise stated, we consider the $\sigma(\UnivEnv(G_q), \C[G_q])$-topology on $\UnivEnv(G_q)$ in the following.  A family $(\phi^q)_{q \in (0, 1]}$ of $*$-isomorphisms said to be \textit{continuous} if it is compatible with the canonical identification $Z(\UnivEnv(G_q)) \simeq \prod_{\lambda \in P_+} \C \simeq Z(\UnivEnv(G))$, and the families $(\phi_q(E_i))_{q \in (0, 1]}$, $(\phi_q(F_i))_{q \in (0, 1]}$, and $(\phi_q(\log_{q_i}(K_i)))_{q \in (0, 1]}$ are continuous in $M(V_\lambda)$ for arbitrary $i$ (that is, the representation under $\pi_\lambda$ of these families are continuous in $M(V_\lambda)$ for any $\lambda$).  We always assume that $\phi^1$ is equal to the identity map of $\UnivEnv(G)$.

\subsection{Poisson--Lie subgroups}
\label{sec:poisson-subgrp}

We review the Poisson--Lie subgroup $K^{S, L} \subset G$ and its quantization as a quantum subgroup of $G_q$ defined by the auxiliary data $(S, L)$, following the treatment of~\cite{arXiv:1103.4346}.  First, $S$ is a subset of $\Pi$.  We let $\liealg{g}_S$ be the subalgebra of $\liealg{g}$ generated by $E_i$ and $F_i$ for $\alpha_i \in S$.  We let $\tilde{K}^S$ denote the closed connected subgroup of $G$ characterized by $\Lie(\tilde{K}^S) = \liealg{g}_S$.  Next, let $(w_i)_{i = 1}^{\rk G}$ be the fundamental weights associated to $P$, and $P^c(S)$ be the subgroup of the weight lattice spanned by the fundamental weights $w_i$ for $\alpha_i \nin S$.  Then $L$ is a subgroup of $P^c(S)$.  Since each weight can be thought as a character on $T$, we may take  the joint kernel $T_L \subset T$ of $L$.  Then, $K^{S, L}$ is defined to be the closed subgroup of $G$ generated by $\tilde{K}^S$ and $T_L$.

Let $\UnivEnv(K^{S, L}_q)$ be the closed subalgebra of $\UnivEnv(G_q)$ generated by $T_L$ and $E_i, F_i$ for $\alpha_i \in S$.  The image of $\C[G_q]$ under the transpose map $\UnivEnv(G_q)^* \rightarrow \UnivEnv(K^{S, L}_q)^*$ is a $*$-algebra denoted by $\C[K^{S, L}_q]$.

The algebra $\C[G_q/K^{S,L}_q]$ is by definition the subalgebra of $\C[G_q]$ consisting of the elements $f$ satisfying $\iota \otimes \pi \circ \Delta_q(f) = f \otimes 1$ for the restriction map $\pi\colon \C[G_q] \rightarrow \C[K^{S,L}_q]$.  The closure of $\C[G_q/K^{S,L}_q]$ in $C(G_q)$ is denoted by $C(G_q/K^{S,L}_q)$.  This notation is consistent with the usual notion of functions over the homogeneous space $G/K^{S, L}$ at $q = 1$.

For each $t \in T$, we have the evaluation map $\ev_t \colon C(G_q) \rightarrow \C$.  Let $L_t$ and $R_t$ denote the translation actions on $C(G_q)$ defined by
\begin{align*}
  L_t(f) &= \left ( \ev_t \otimes \iota \right ) \Delta_q(f),&
  R_t(f) &= \left ( \iota \otimes \ev_t \right ) \Delta_q(f).
\end{align*}
We call $\Adj_t = L_t R_{-t}$ the adjoint action of $T$.  The left translation action of $T$ on $C(G_q)$ restricts to $C(G_q/K^{S,L}_q)$.  Since the subalgebra $\UnivEnv(K^{S, L}_q)$ is normalized by $T$, we also have a right translation action of $T$ on $C(G_q/K^{S,L}_q)$, which is trivial on $T_L$.  Hence the right translation action can be thought as an action of $T/T_L$.

\begin{example}
 If $S = \emptyset$ and $L = 0$, the quantum subgroup $K^{S, L}_q$ is equal to $T$.  Hence $C(G_q/K^{S,L}_q)$ is the algebra $C(G_q/T)$ of quantum flag manifold.
\end{example}

\begin{example}
\label{example:quantum-sphere-as-homogen-su-n-su-n-1}
 Let $G = \SU(n)$ and $\Pi = \ensemble{\alpha_1, \ldots, \alpha_{n-1}}$ with the standard linear ordering satisfying $(\alpha_i, \alpha_i) = 2, (\alpha_i, \alpha_{i\pm1}) = -1$.  Then, for $S = \ensemble{\alpha_1, \ldots, \alpha_{n-2}}$ and $L = P^c(S)$, we have $K^{S,L} = \SU(n-1)$ which is embedded into the upper-left corner of $G$.  Thus $K^{S, L}_q$ can be identified with the quantum subgroup $\SU_q(n-1)$ of $G_q = \SU_q(n)$.  Then the associated homogeneous space $G_q/K^{S,L}_q$ is the odd-dimensional quantum sphere $S^{2n-1}_q$ introduced by Vaksman--So{\u\i}bel{\cprime}man~\cite{MR1086447}.  The right translation by $T/T_L = \U(1)$ is the gauge action for the standard generators $z_1, \ldots, z_n$ of $C(S^{2n-1}_q)$.
\end{example}

\subsection{Continuous fields of quantum spaces}
\label{sec:cont-field-quant-homog-sps}

We review the continuous field of quantum spaces, following the treatment of~\cite{arXiv:1103.4346}.

Let $\Gamma_{I}(C(\overline{\Disc}_q))$ denote the universal algebra $C^*(Q, Z)$ generated by a positive contraction $Q$ and another element $Z$ satisfying
\begin{align*}
\norm{Z} &= 1, &
Q Z &= Z Q, &
1 - Z^* Z &= Q^2 (1 - Z Z^*).
\end{align*}
The inclusion of $C(I) \simeq C^*(Q)$ into $C^*(Q, Z)$ defines the structure of a $C(I)$-algebra.  This way, the algebras $(C(\overline{\Disc}_q))_{q \in I}$ form a continuous field of C$^*$-algebras such that $q \mapsto Z_q$ is a continuous section.  Next, the algebra $\Gamma_I(C_0(\Disc_q))$ is defined as the kernel of the homomorphism of $\Gamma_I(C_0(\overline{\Disc}_q))$ onto $C(I) \otimes C(S^1)$ given by $Z \rightarrow z \in C(S^1)$.  This defines a continuous field with fiber $(C_0(\Disc_q))_{q \in I}$.  By nuclearity of fibers, the tensor products of these $C(I)$-algebras over $C(I)$ become again continuous fields in an unique way.

Next, let us recall the field of quantum homogeneous spaces.  Let $\phi^q\colon \UnivEnv(G_q) \rightarrow \UnivEnv(G)$ be a continuous family of $*$-algebra isomorphisms for $q \in (0, 1]$ which respect the highest weight of the irreducible modules.  By transposition, we obtain a family of coalgebra isomorphisms $\phi^{q \#}$ from $\C[G]$ to $\C[G_q]$.  If we fix an closed subinterval $J$ of $(0, 1]$, the algebras $(C(G))_{q \in J}$ admit a unique structure of continuous field such that $(f(q) (\phi^q)^\#(a))_{q \in J}$ is a continuous section for each $f  \in C(J)$ and $a \in \C[G]$~\cite{arXiv:1102.0248}*{Theorem~1.2}.

In other words, the algebraic tensor product $C(J) \otimes \C[G]$ becomes a $*$-algebra by
\[
(f^1 \otimes a)  ( f^2 \otimes b)(q) = f^1(q) f^2(q) (\phi^{q \#})^{-1}(\phi^{q \#}(a) \phi^{q \#}(b)).
\]
There is a unique pre-C$^*$-norm on this algebra, such that the quotient at $q \in J$ of its C$^*$-algebraic closure is equal to $C(G_q)$.  We let $\Gamma_J(C(G_q))$ denote the closure of $C(J) \otimes \C[G]$.  This is a $C(J)$-algebra with fiber $C(G_q)$ at $q \in J$.

Let $K$, $S$, and $L$ be as in the previous section.  We may further assume that $\phi^q(\UnivEnv(K^{S, L}_q)) = \UnivEnv(K^{S, L})$.  This implies $\phi^{q \#}(\C[G/K^{S, L}]) = \C[G_q/K^{S, L}_q]$, and that the subalgebra of $\Gamma_J(C(G_q))$ spanned by $C(J) \otimes \C[G/K^{S, L}]$ is a continuous subfield whose fiber at $q$ is $C(G_q/K^{S, L}_q)$~\cite{arXiv:1103.4346}*{Proposition~5.3}.

\subsection{Braided tensor products}
\label{sec:braded-tensor-prod}

Next, we review the basic facts on the Yetter--Drinfeld algebras and their braided tensor products, following the convention of Nest--Voigt~\cite{MR2566309}.

Let $\qgrp{G}$ be a compact quantum group.  We let $C^r(\qgrp{G})$ and $C^*(\qgrp{G})$ denote the reduced function algebra and the convolution C$^*$-algebra associated with $\qgrp{G}$.

Let $W$ denote the multiplicative unitary of $\qgrp{G}$.  It is a unitary operator on $L^2(\qgrp{G}) \otimes L^2(\qgrp{G})$ satisfying the pentagonal equation $W_{12}W_{13}W_{23} = W_{23} W_{12}$ and
\begin{align*}
C^r(\qgrp{G}) &= \overline{\ensemble{\iota \otimes \omega(W) \suchthat \omega \in B(L^2(\qgrp{G}))_*}}^{\norm{\cdot}},&
C^*(\qgrp{G}) &= \overline{\ensemble{\omega \otimes \iota(W) \suchthat \omega \in B(L^2(\qgrp{G}))_*}}^{\norm{\cdot}}.
\end{align*}
The operator $W$ actually belongs to $\multAlg(C^r(\qgrp{G}) \otimes C^*(\qgrp{G}))$, and it implements the coproducts of $C^r(\qgrp{G})$ and $C^*(\qgrp{G})$ by
\begin{align*}
\Delta&\colon C^r(\qgrp{G}) \rightarrow C^r(\qgrp{G}) \otimes C^r(\qgrp{G}), & x &\mapsto W^* (1 \otimes x) W,\\
\hat{\Delta}&\colon C^*(\qgrp{G}) \rightarrow \multAlg(C^*(\qgrp{G}) \otimes C^*(\qgrp{G})), & y &\mapsto \Sigma W (y \otimes 1) W^* \Sigma,
\end{align*}
where $\Sigma$ is the flip map.

The Drinfeld double $\DG$ of $\qgrp{G}$ is a locally compact quantum group whose function algebra is given by $C_0^r(\DG) = C^r(\qgrp{G}) \otimes C^*(\qgrp{G})$ endowed with the coproduct
\[
\Delta_{\DG} = (\Sigma \circ \Adj_W)_{2 3} \circ (\Delta \otimes \hat{\Delta}).
\]
To have a structure of $C_0^r(\DG)$-algebra on a C$^*$-algebra $A$ is equivalent to have that of a \textit{$\qgrp{G}$-Yetter--Drinfeld algebra} on $A$, which is given by coactions
\begin{align}
\label{eq:YD-alg-coaction-components}
\lambda\colon& A \rightarrow \multAlg(C^*(\qgrp{G}) \otimes A),&
\alpha\colon& A \rightarrow C^r(\qgrp{G}) \otimes A,
\end{align}
such that the diagram
\[
\begin{CD}
A @>{\lambda}>> \multAlg(C^*(\qgrp{G}) \otimes A) @>{\iota \otimes \alpha}>> \multAlg(C^*(\qgrp{G}) \otimes C^r(\qgrp{G}) \otimes A) \\
@VV{\alpha}V @. @VV{\Sigma_{12}}V \\
C^r(\qgrp{G}) \otimes A @>{\iota \otimes \lambda}>> \multAlg(C^r(\qgrp{G}) \otimes C^*(\qgrp{G}) \otimes A) @>{\Adj_{W_{1 2}}}>> \multAlg(C^r(\qgrp{G}) \otimes C^*(\qgrp{G}) \otimes A)
\end{CD}
\]
commutes.  Let $a * \psi = \psi \otimes \iota (\lambda(a))$ denote the right action of $\psi \in \C[\qgrp{G}]$ on $A$ induced by $\lambda$, and let $S$ be the full antipode of $\C[\qgrp{G}]$.  Then the above compatibility condition between $\lambda$ and $\alpha$ can be written as
\begin{equation}
\label{eq:yetter-drinfeld-condi-by-action}
\alpha( a * \psi ) = (\psi_{(1)}\alpha(a)_1 S(\psi_{(3)})) \otimes (\alpha(a)_2 * \psi_{(2)}).
\end{equation}

Let $A$ be a $\qgrp{G}$-Yetter--Drinfeld algebra, whose coactions are respectively given by $\lambda_A$ and $\alpha_A$ as in~\eqref{eq:YD-alg-coaction-components}.  When $B$ is a C$^*$-algebra with a coaction $\alpha_B$ of $C^r(\qgrp{G})$, we obtain the \textit{braided tensor product} $A \boxtimes_{\qgrp{G}} B$~\citelist{\cite{MR2182592}\cite{MR2566309}*{Section~3}}, which is the C$^*$-algebra linearly spanned by the operators of the form $\lambda_A(a) \alpha_B(b)$ considered as endomorphisms of the right Hilbert $A \otimes B$-module $L^2(\qgrp{G}) \otimes A \otimes B$.  There is a structure of a $\qgrp{G}$-algebra on $A \boxtimes_{\qgrp{G}} B$ given by the coaction
\[
A \boxtimes_{\qgrp{G}} B \rightarrow C^r(\qgrp{G}) \otimes A \boxtimes_{\qgrp{G}} B, \quad \lambda_A(a) \alpha_B(b) \mapsto \lambda_A(\alpha_A(a))_{1 2 3} \alpha_B(\alpha_B(b))_{1 2 4}.
\]
Suppose in addition that there is a coaction $\lambda_B$ of $C^*(\qgrp{G})$ on $B$ satisfying the $\qgrp{G}$-Yetter--Drinfeld algebra condition.  Then $A \boxtimes_{\qgrp{G}} B$ becomes a $\qgrp{G}$-Yetter--Drinfeld algebra by the above coaction of $C^r(\qgrp{G})$ and the coaction of $C^*(\qgrp{G})$ defined by
\[
A \boxtimes_{\qgrp{G}} B \rightarrow C^*(\qgrp{G}) \otimes A \boxtimes_{\qgrp{G}} B, \quad \lambda_A(a) \alpha_B(b) \mapsto \lambda_A(\lambda_A(a))_{1 2 3} \alpha_B(\lambda_B(b))_{1 2 4}.
\]

\begin{rmk}
\label{rmk:br-tensor-func-alg-with-itself}
The algebra $C^r(\qgrp{G})$ is a $\qgrp{G}$-Yetter--Drinfeld algebra.  The coaction $\alpha$ in~\eqref{eq:YD-alg-coaction-components} is given by the coproduct $\Delta$ of $C^r(\qgrp{G})$, and the one $\lambda$ of $C^*(\qgrp{G})$ is by the left adjoint coaction.  The braided tensor product of $C^r(\qgrp{G})$ with itself is isomorphic to $C^r(\qgrp{G}) \otimes C^r(\qgrp{G})$, via the isomorphism
\begin{equation}
\label{eq:braid-tensor-isom-usual-tensor}
C^r(\qgrp{G}) \boxtimes C^r(\qgrp{G}) \rightarrow C^r(\qgrp{G}) \otimes C^r(\qgrp{G}), \quad\lambda(a)_{1 2} \delta(b)_{1 3} \mapsto (a \otimes 1) \Delta(b).
\end{equation}
\end{rmk}

\subsection{Induction of coaction}
\label{sec:induc-coact}

Finally, let us review the theory of induction of coactions of compact quantum groups, following the treatment of Vaes~\cite{MR2182592} and~\cite{MR2566309}.  In this section $\qgrp{G}$ denotes a compact quantum group and $\qgrp{K}$ denotes its closed quantum subgroup.  We consider the reduced coactions of $\qgrp{G}$ and $\qgrp{K}$ throughout (although we formulate the results in this generality, later we will only need the case where $\qgrp{G} = G_q$ and $\qgrp{K} = T$).

In this section we let $M$ denote the von Neumann algebra $L^\infty(\qgrp{G})$ represented on $L^2(\qgrp{G})$ by the GNS representation of the Haar state, and $M'$ its commutant.  We let $R_{\qgrp{K}}$ denote the right translation coaction $M \rightarrow M \mathop{\overline{\otimes}} L^\infty(\qgrp{K})$. 

Let $A$ be a C$^*$-algebra endowed with a coaction $\alpha$ of $\qgrp{K}$, and put
\[
\tilde{A} = \ensemble{X \in \multAlg(\HmodCpt(L^2(\qgrp{G})) \otimes A) \suchthat X \in (M' \otimes \C)', R_{\qgrp{K}} \otimes \iota(X) = \iota \otimes \alpha(X)}.
\]
We consider the natural representation of $\tilde{A}$ by endomorphisms of the right C$^*$-$A$-module $L^2(\qgrp{G}) \otimes A$.  As before, $W$ denotes the fundamental unitary of $\qgrp{G}$.  Then we may take a homomorphism
\[
\Delta \otimes \iota \colon \tilde{A} \rightarrow \End (L^2(\qgrp{G}) \otimes L^2(\qgrp{G}) \otimes A)_{A}, \quad X \mapsto \Adj_{W^*_{1 2}}(X_{2 3}).
\]

The induced algebra $\Ind^{\qgrp{G}}_{\qgrp{K}} A$ can be characterized as the unique C$^*$-subalgebra of $\tilde{A}$ satisfying the following conditions~\cite{MR2182592}*{Theorem~7.2}.
\begin{itemize}
\item $\Delta \otimes \iota$ restricts to a homomorphism $\Ind^{\qgrp{G}}_{\qgrp{K}} A \rightarrow C^r(\qgrp{G}) \otimes \Ind^{\qgrp{G}}_{\qgrp{K}} A$, which is a continuous coaction of $C^r(\qgrp{G})$.
\item $\Delta \otimes \iota\colon \tilde{A} \rightarrow \multAlg(\HmodCpt(L^2(\qgrp{G})) \otimes \Ind^{\qgrp{G}}_{\qgrp{K}} A)$ is well defined and strictly continuous on the unit ball of $\tilde{A}$.
\item the representation of $\Ind^{\qgrp{G}}_{\qgrp{K}} A$ on $L^2(\qgrp{G}) \otimes A$ is nondegenerate, i.e. the subspace $(\Ind^{\qgrp{G}}_{\qgrp{K}} A) (L^2(\qgrp{G}) \otimes A)$ is dense in $L^2(\qgrp{G}) \otimes A$.
\end{itemize}
For the second condition, we are considering the restriction of the strict topology on $\multAlg(\HmodCpt(L^2(\qgrp{G})) \otimes A)$ to $\tilde{A}$.

The following description of induced algebra should have been known to experts, but we include it for the sake of completeness.

\begin{prop}
\label{prop:qgrp-coaction-ind-as-subalg-tens-prod}
Let $A$ be a $\qgrp{K}$-algebra.  Then the C$^*$-subalgebra
\begin{equation}
\label{eq:induction-for-cpt-subgrp}
D = \ensemble{X \in C^r(\qgrp{G}) \otimes A \suchthat R_{\qgrp{K}} \otimes \iota(X) = \iota \otimes \alpha(X)}
\end{equation}
of $\tilde{A}$ satisfies the above conditions for $\Ind^{\qgrp{G}}_{\qgrp{K}}(A)$.
\end{prop}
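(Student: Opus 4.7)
The plan is to verify the three characterizing conditions of the induced algebra directly for $D$ and then invoke Vaes' uniqueness statement in~\cite{MR2182592}*{Theorem~7.2}. The inclusion $D \subset \tilde{A}$ is automatic: since $C^r(\qgrp{G}) \subset M$ commutes with $M'$, we have $D \subset C^r(\qgrp{G}) \otimes A \subset (M' \otimes \C)'$, while the equivariance condition $R_{\qgrp{K}} \otimes \iota = \iota \otimes \alpha$ imposed on elements of $\tilde{A}$ is exactly the defining relation~\eqref{eq:induction-for-cpt-subgrp} of $D$.

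For the first condition, observe that on $C^r(\qgrp{G}) \otimes A$ the prescription $X \mapsto \Adj_{W_{12}^*}(X_{23})$ reduces to the ordinary $\Delta \otimes \iota$ applied to the first leg. By compactness of $\qgrp{G}$ this lands in $C^r(\qgrp{G}) \otimes C^r(\qgrp{G}) \otimes A$. The identity $(\iota \otimes R_{\qgrp{K}}) \circ \Delta = (\Delta \otimes \iota) \circ R_{\qgrp{K}}$, an immediate consequence of coassociativity of $\Delta$ combined with the description $R_{\qgrp{K}} = (\iota \otimes \pi_{\qgrp{K}}) \circ \Delta$ via the restriction $\pi_{\qgrp{K}}\colon C^r(\qgrp{G}) \to C^r(\qgrp{K})$, then forces the image to lie in $C^r(\qgrp{G}) \otimes D$. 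Coassociativity and the Podle\'{s} density condition for this coaction are inherited from $\Delta$ on $C^r(\qgrp{G})$. Running the analogous computation for an arbitrary $X \in \tilde{A}$, and noting that $W_{12}$ only touches the first two legs and hence commutes with the auxiliary $\qgrp{K}$-factor introduced by $R_{\qgrp{K}}$ or $\alpha$, yields the second condition; strict continuity on bounded subsets then reduces to strict continuity of $\Adj_{W_{12}^*}$.

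The main obstacle is the third condition, nondegeneracy of $D$ acting on $L^2(\qgrp{G}) \otimes A$. My plan is to construct a bounded $D$-bimodule projection $E\colon C^r(\qgrp{G}) \otimes A \to D$ by averaging against the Haar state $h_{\qgrp{K}}$ of $\qgrp{K}$ with a twist by its antipode---the quantum analogue of $f \mapsto \bigl(g \mapsto \int_{\qgrp{K}} \alpha_k^{-1}(f(gk))\,dk\bigr)$. For any approximate identity $(e_\lambda)$ of $C^r(\qgrp{G}) \otimes A$, the net $(E(e_\lambda)) \subset D$ is then bounded and converges strictly to $1$ in $\multAlg(\HmodCpt(L^2(\qgrp{G})) \otimes A)$, which delivers the required nondegeneracy. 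As a cross-check, one can alternatively produce explicit matrix-coefficient elements of $D$ via Peter--Weyl for $\qgrp{G}$ combined with Frobenius reciprocity for the restriction to $\qgrp{K}$.
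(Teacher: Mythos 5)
Your handling of the first condition is essentially the paper's argument (commutativity of the left and right translations, via $(\iota \otimes R_{\qgrp{K}})\circ\Delta = (\Delta\otimes\iota)\circ R_{\qgrp{K}}$), and your plan for nondegeneracy, though workable in principle, is heavier than needed: since $\qgrp{G}$ is compact, $C^r(\qgrp{G})$ is unital and $D$ already contains $C^r(\qgrp{G}/\qgrp{K}) \otimes A^{\alpha}$, which acts nondegenerately on $L^2(\qgrp{G}) \otimes A$; no averaging projection is required. If you do pursue the averaging map, note that for a genuinely quantum $\qgrp{K}$ the antipode is unbounded, so the ``twist by $S$'' must be replaced by a formula using only the Haar state and the coaction.

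The genuine gap is in the second condition. Saying that ``strict continuity on bounded subsets reduces to strict continuity of $\Adj_{W_{12}^*}$'' conflates two different strict topologies. Conjugation by the unitary multiplier $W_{12}$ is strictly continuous on bounded subsets of $\multAlg(\cptops(L^2(\qgrp{G})) \otimes \cptops(L^2(\qgrp{G})) \otimes A)$, but the condition to be verified is that $X \mapsto \Adj_{W_{12}^*}(X_{23})$ is well defined and strictly continuous as a map from $\tilde{A}$ into $\multAlg(\cptops(L^2(\qgrp{G})) \otimes D)$: the image must multiply the much smaller algebra $\cptops(L^2(\qgrp{G})) \otimes D$ into itself, and continuity must hold for the seminorms $T \mapsto \norm{Tc}$, $T \mapsto \norm{cT}$ with $c \in \cptops(L^2(\qgrp{G})) \otimes D$. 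Since such $c$ are multipliers of, but not elements of, $\cptops(L^2(\qgrp{G})) \otimes \cptops(L^2(\qgrp{G})) \otimes A$, strict convergence in the big multiplier algebra only controls $T_\lambda c k$ for $k$ compact, not $T_\lambda c$ in norm, so neither well-definedness nor continuity follows formally from your observation about $W_{12}$ commuting with the auxiliary $\qgrp{K}$-leg (which only addresses the invariance relation satisfied by the image). The paper supplies exactly the missing idea: it identifies $\cptops(L^2(\qgrp{G}))$ with the reduced crossed product $C^*(\qgrp{G}) \rtimes_{\hat{\Delta}} C^r(\qgrp{G})$, so that the map in question is the multiplier extension of a genuine nondegenerate $*$-homomorphism $\cptops(L^2(\qgrp{G})) \otimes A \rightarrow \cptops(L^2(\qgrp{G})) \otimes C^r(\qgrp{G}) \otimes A$ coming from the dual coaction; strict continuity is then automatic, and it only remains to check that $\tilde{A}$ is sent into $\multAlg(\cptops(L^2(\qgrp{G})) \otimes D)$. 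You need this, or an equivalent argument, for your step to go through.
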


\begin{proof}
The first condition is satisfied by the commutativity of the left and right translations.  The third one follows from the fact that $D$ contains $C^r(\qgrp{G}/\qgrp{K}) \otimes A^{\alpha}$ whose action is already nondegenerate on $L^2(\qgrp{G}) \otimes A$, thanks to the compactness of $\qgrp{G}$.

Let us observe that the second condition also holds.  First, we regard $\cptops(L^2(\qgrp{G}))$ as the reduced crossed product $C^*(\qgrp{G}) \rtimes_{\hat{\Delta}} C^r(\qgrp{G})$ of $C^*(\qgrp{G})$ by itself with respect to the coaction $\hat{\Delta}$.  The dual coaction is given by $\Delta$ on the copy of $C^r(\qgrp{G})$ and the trivial one on the copy of $C^*(\qgrp{G})$.  Hence we obtain a C$^*$-algebra homomorphism
\[
\Delta \otimes \iota\colon \cptops(L^2(\qgrp{G})) \otimes A \rightarrow \cptops(L^2(\qgrp{G})) \otimes C^r(\qgrp{G}) \otimes A,
\]
which induces a strictly continuous homomorphism between the multiplier algebras.  The restriction of this homomorphism to $\tilde{A}$ agrees with $\Adj_{W_{1 2}}$.  Finally, it can be easily seen that the image of $\tilde{A}$ under $\Delta \otimes \iota$ is contained in $\multAlg(\cptops(L^2(\qgrp{G})) \otimes D)$.
\end{proof}

We note that, if $\qgrp{K}$ is an ordinary compact group, the above construction is equal to taking the fixed point algebra $(C^r(\qgrp{G}) \otimes A)^{\qgrp{K}}$ under the diagonal action $k.(f \otimes a) = R_{k^{-1}}(f) \otimes \alpha_k(a)$ for $f \in C^r(\qgrp{G}), a \in A$ and $k \in \qgrp{K}$.

The above construction and the correspondence
\begin{equation}
\label{eq:equiv-alg-ind}
\Hom_{\qgrp{K}-\text{alg}}(A, B) \rightarrow \Hom_{\qgrp{G}-\text{alg}}(\Ind^{\qgrp{G}}_{\qgrp{K}} A, \Ind^{\qgrp{G}}_{\qgrp{K}} B) , \quad f \mapsto (\Id_{C^r(\qgrp{G})} \otimes f)|_{\Ind^{\qgrp{G}}_{\qgrp{K}} A}
\end{equation}
of equivariant homomorphisms define a functor $\Ind^{\qgrp{G}}_{\qgrp{K}}$ from the category of $\qgrp{K}$-algebras to that of $\qgrp{G}$-algebras.

Let $A_1, A_2$ be $\qgrp{G}$-algebras whose coactions are denoted by $\alpha_i\colon A_i \rightarrow C^r(\qgrp{G}) \otimes A_i$ for $i = 1, 2$.  Recall that a cycle in $\KK^{\qgrp{G}}(A_1, A_2)$ is represented by a triple $(\bimod{E}, X, F)$ where $(\bimod{E}, F)$ is a Kasparov $A_1$-$A_2$-bimodule, and $X \in \End(C^r(\qgrp{G}) \otimes \bimod{E})_{C^r(\qgrp{G})}$ is a corepresentation of $C^r(\qgrp{G})$ on $\bimod{E}$, satisfying the equivariance condition
\begin{gather*}
(X (1 \otimes \xi), X (1 \otimes \eta))_{C^r(\qgrp{G}) \otimes A_2} = \alpha_2((\xi, \eta)_{A_2}),\\
\begin{align*}
X (1 \otimes \xi a_2) &= (X (1 \otimes \xi)) \alpha_2(a_2),&
X (1 \otimes a_1 \xi) &= \alpha_1(a_1) X  1 \otimes \xi,
\end{align*}
\end{gather*}
and the almost invariance condition
\[
(1 \otimes a_1) (1 \otimes F - X^*(1 \otimes F) X) \in C^r(\qgrp{G}) \otimes \cptops(\bimod{E}_{A_2}),
\]
where $a_i \in A_i$ and $\xi \in \bimod{E}$.  Let $h_\qgrp{G}$ be the Haar state of $C^r(\qgrp{G})$, and put $\tilde{F} = h_\qgrp{G} \otimes \iota(X^* (1 \otimes F) X)$.  As in the case of ordinary compact groups, the $\qgrp{G}$-invariant operator $\tilde{F}$ defines an $A_1$-$A_2$-Kasparov bimodule which is in the same class as $F$.

Let $(\bimod{E}, X, F)$ be an equivariant Kasparov $B_1$-$B_2$-bimodule satisfying $[X, 1 \otimes F] = 0$.  The induced Kasparov bimodule can be described as follows.  First, put
\[
\Ind^{\qgrp{G}}_{\qgrp{K}} \bimod{E} = \ensemble{\xi \in C^r(\qgrp{G}) \otimes \bimod{E} \suchthat R_{\qgrp{K}} \otimes \iota (\xi) = \iota \otimes X(\xi_{1 3}) }.
\]
Then the restriction of the $C^r(\qgrp{G}) \otimes B_2$-valued inner product to $\Ind^{\qgrp{G}}_{\qgrp{K}} \bimod{E}$ takes value in $\Ind^{\qgrp{G}}_{\qgrp{K}} B_2$ by Proposition~\ref{prop:qgrp-coaction-ind-as-subalg-tens-prod}.  The restriction of $\Delta \otimes \iota$ defines a $\qgrp{G}$-equivariant bimodule structure on this bimodule.  Furthermore, the $\qgrp{K}$-invariance of $F$ implies that $1 \otimes F$ preserves $\Ind^{\qgrp{G}}_{\qgrp{K}} \bimod{E}$ and defines an $\Ind^{\qgrp{G}}_{\qgrp{K}} B_1$-$\Ind^{\qgrp{G}}_{\qgrp{K}} B_2$-Kasparov bimodule.  This procedure defines a functor $\Ind^{\qgrp{G}}_{\qgrp{K}}$ from $\KK^{\qgrp{K}}$ to $\KK^{\qgrp{G}}$ which extends~\eqref{eq:equiv-alg-ind}.  By the uniqueness of such extension, this agrees with the induction functor between the equivariant $\KK$-categories defined in~\cite{MR2566309}.

By the regularity of $\qgrp{G}$, the restriction of $\qgrp{G}$-coactions to $\qgrp{K}$-coactions makes sense.  It induces the the restriction functor $\Res^{\qgrp{G}}_{\qgrp{K}}\colon \KK^\qgrp{G} \rightarrow \KK^{\qgrp{K}}$.  Then $\Ind^{\qgrp{G}}_\qgrp{K}$ is right adjoint to $\Res^{\qgrp{G}}_{\qgrp{K}}$, in the sense that we have a natural isomorphism
\begin{equation}
\label{eq:qgrp-KK-res-ind-adjunction}
\KK^\qgrp{K}(\Res^{\qgrp{G}}_\qgrp{K}(A), B) \simeq \KK^{\qgrp{G}}(A, \Ind^{\qgrp{G}}_\qgrp{K}(B))
\end{equation}
for any $\qgrp{G}$-algebra $A$ and $\qgrp{K}$-algebra $B$~\cite{MR2566309}*{Proposition~4.7}.

When $A$ is a $\qgrp{K}$-Yetter--Drinfeld algebra, $\Ind^{\qgrp{G}}_{\qgrp{K}} A$ has a structure of a $\qgrp{G}$-Yetter--Drinfeld algebra~\cite{MR2566309}*{Theorem~3.4}.  In view of~\eqref{eq:yetter-drinfeld-condi-by-action} and~\eqref{eq:induction-for-cpt-subgrp}, the action of $\C[\qgrp{G}]$ on $D$ given by
\[
(f \otimes a) . \psi = (\psi_{(1)} f S(\psi_{(3)})) \otimes (a * \psi_{(2)}), \quad (f \otimes a \in D, \psi \in \C[\qgrp{G}])
\]
characterizes the coaction of $C^*(\qgrp{G})$ on $\Ind^{\qgrp{G}}_{\qgrp{K}} A$.  Here, the right action of $\C[\qgrp{G}]$  on $A$ is induced by the coaction of $C^*(\qgrp{K})$ and the embedding $C^*(\qgrp{K}) \rightarrow \multAlg(C^*(\qgrp{G}))$.

Applying this to the $\qgrp{K}$-Yetter--Drinfeld algebra $\C$, we obtain that $C^r(\qgrp{G}/\qgrp{K}) = \Ind^{\qgrp{G}}_{\qgrp{K}} \C$ is a $\qgrp{G}$-Yetter--Drinfeld algebra.  We then have a natural isomorphism~\cite{MR2182592}*{Theorem~8.2}
\begin{equation}
\label{eq:res-ind-braided-tensor}
\Ind^{\qgrp{G}}_{\qgrp{K}} \Res^{\qgrp{G}}_{\qgrp{K}}(A) \simeq C^r(\qgrp{G}/\qgrp{K}) \boxtimes_{\qgrp{G}} A
\end{equation}
when $A$ is a $\qgrp{G}$-algebra (this also follows from the description~\eqref{eq:induction-for-cpt-subgrp} of $\Ind^{\qgrp{G}}_{\qgrp{K}}$).

\begin{example}
\label{expl:ind-G_q-is-braided-tensor-flag-G_q}
We may view $C(G_q)$ and $C(T)$ as $T$-algebras by the left translation action of $T$ on these algebras.  For the induction from $C(G_q)$, we have
\begin{equation*}
\Ind^{G_q}_T C(G_q) = C(G_q / T) \boxtimes C(G_q) = (C(G_q) \otimes C(G_q))^{\rho^{-1} \otimes \lambda(T)}
\end{equation*}
by~\eqref{eq:res-ind-braided-tensor} and Remark~\ref{rmk:br-tensor-func-alg-with-itself}.  For the one from  $C(T)$, we have
\[
 \Ind^{G_q}_T C(T) = C(G_q),
\]
by~\eqref{eq:induction-for-cpt-subgrp}.
\end{example}

\section{Equivariant comparison of quantum manifolds}
\label{sec:equivar-compar-quant-g-sp}

%

\subsection{Quantum symplectic leaves}
\label{sec:quant-sympl-leaves}

Let $\alpha$ and $\gamma$ be the standard generators of $C(\SU_q(2))$ satisfying
\begin{align*}
\alpha^* \alpha + \gamma^* \gamma &= 1, &
\alpha \alpha^* + q^2 \gamma \gamma^*  &= 1, &
\gamma^* \gamma &= \gamma \gamma^*, &
\alpha \gamma &= q \gamma \alpha, &
\alpha \gamma^* &= q \gamma^* \alpha.
\end{align*}
There is a C$^*$-algebra homomorphism from $C(\SU_q(2))$ onto $C(\overline{\Disc}_q)$ defined by
\begin{align}
\label{eq:alg-hom-SUq2-to-q-disc}
\alpha & \mapsto Z_q^*, &
\gamma & \mapsto - (1 - Z_q Z_q^*)^{\frac{1}{2}}.
\end{align}
By composing this with~\eqref{eq:q-disc-Toeplitz-corr}, we obtain a representation $\rho_q$ of $C(\SU_q(2))$ on $\ell^2 \N$.

The homomorphism~\eqref{eq:alg-hom-SUq2-to-q-disc} restricts to an isomorphism between $C(\SU_q(2)/\U(1))$ and the unitization $C_0(\Disc_q)^+$ of $C_0(\Disc_q)$.  We record the image of generators of $C(\SU_q(2)/\U(1))$ under this map:
\begin{align*}
\alpha \alpha^* &\mapsto Z_q^* Z_q, &
\alpha \gamma^* &\mapsto - Z_q^* (1 - Z_q Z_q^*)^{\frac{1}{2}}, &
\gamma \gamma^* &\mapsto (1 - Z_q Z_q^*).
\end{align*}

Let $\sigma_t$ be the action of $\U(1)$ on $C(\SU_{q}(2))$ defined by
\begin{align*}
\sigma_t(\alpha) &= e^{2 \pi i t} \alpha, &
\sigma_t(\gamma) &= \gamma.
\end{align*}
It is a rescaled action of the modular automorphism group of the Haar state.  It is also related to the translation actions by $\sigma_{2 t}(x) = L_t R_t (x)$.  Similarly, the action
\begin{align*}
\tau_t(\alpha) &= \alpha, &
\tau_t(\gamma) &= e^{2 \pi i t} \gamma
\end{align*}
of $\U(1)$ corresponds to the scaling group, and satisfies $\tau_{2 t}(x) = L_t R_{-t}(x)$.

Let $U_t$ be the unitary representation of $\U(1)$ on $\ell^2 \N$ defined by
\begin{equation}
\label{eq:torus-action-on-ell-2-N}
U_t e_n = e^{2 \pi i n t} e_n \quad (n \in \N).
\end{equation}
Via~\eqref{eq:q-disc-Toeplitz-corr}, the adjoint action $\Adj_{U_t}$ on $\Tpltz$ is identified with the rotation action $Z_q \mapsto e^{-2 \pi i t} Z_q$ of $\U(1)$ on $C(\overline{\Disc}_q)$.  It follows that the representation $\rho_q$ is equivariant with respect to $\sigma_t$ and $\Ad_{U_t}$.

\subsection{Equivariant comparison of quantum discs}
\label{sec:compar-fam-q-discs}

\begin{lem}
\label{lem:q-disc-equivar-equiv-pt}
Let $k$ be a positive integer.  For each $q \in I$, the embedding
\begin{equation}
\label{eq:q-disc-prod-scal-emb}
\iota\colon \C \rightarrow \underbrace{C(\overline{\Disc}_{q}) \otimes \cdots \otimes C(\overline{\Disc}_{q})}_{k \times}
\end{equation}
induces a $\KK^{\U(1)^k}$-equivalence.
\end{lem}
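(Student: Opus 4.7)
The plan has three steps: reduce to $k=1$, dispose of the classical endpoint $q=1$ by a deformation retraction, and handle the quantum range $0 \le q < 1$ via the Toeplitz algebra and the equivariant Universal Coefficient Theorem.

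Each algebra $C(\overline{\Disc}_q)$ is type I and hence nuclear, so external tensor product in equivariant $\KK$-theory preserves $\KK$-equivalences with respect to product groups. By induction on $k$, it therefore suffices to show that $\iota\colon \C \to C(\overline{\Disc}_q)$ is a $\KK^{\U(1)}$-equivalence for each fixed $q \in I$. For $q = 1$, the scaling $h_s(z) = sz$, $s \in I$, is a $\U(1)$-equivariant strong deformation retraction of $\overline{\Disc}$ onto the origin; pulled back to continuous functions it makes $\ev_0 \colon C(\overline{\Disc}) \to \C$ a $\U(1)$-equivariant homotopy inverse of $\iota$, which gives the $\KK^{\U(1)}$-equivalence directly.

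For $0 \le q < 1$, the faithful representation~\eqref{eq:q-disc-Toeplitz-corr} identifies $C(\overline{\Disc}_q)$ with the Toeplitz algebra $\Tpltz$, and Section~3.1 identifies the rotation action with the inner action $\Adj_{U_t}$ of the diagonal unitary~\eqref{eq:torus-action-on-ell-2-N}. Because the action is inner, the crossed product canonically splits as $\Tpltz \rtimes \U(1) \cong \Tpltz \otimes C^*(\U(1))$, under which the map induced by $\iota$ becomes the inclusion $c \mapsto 1 \otimes c$. The Künneth formula, combined with Green--Julg's identification $\KK^{\U(1)}(\C, B) \cong K_*(B \rtimes \U(1))$ and the non-equivariant $\KK$-equivalence $\C \simeq \Tpltz$, then shows that $\iota_*$ induces an isomorphism of $R(\U(1))$-modules $K_*^{\U(1)}(\C) \to K_*^{\U(1)}(\Tpltz)$. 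Since both $\C$ and $\Tpltz$ lie in the equivariant bootstrap class, the equivariant UCT of~\cite{MR849938} upgrades this to the desired $\KK^{\U(1)}$-equivalence.

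The substantive step is the quantum case; its key technical point is the identification of $\iota_*$ with the identity of $R(\U(1))$ via the inner-action splitting of the crossed product, after which the UCT makes the rest formal. An alternative route — avoiding the UCT — would explicitly construct an inverse Kasparov bimodule from a $\Z/2$-graded Toeplitz-index module on $\ell^2\N \oplus \ell^2\N$, with equivariance arranged by a unit weight-shift between the two summands so that the off-diagonal operator built from the shift $S$ becomes $\U(1)$-invariant.
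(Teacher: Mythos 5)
Your reduction to $k=1$ and your treatment of the endpoint $q=1$ are fine and agree with the paper. The gap is exactly at the step you yourself single out as the key technical point: the rotation action $\Adj_{U_t}$ on $\Tpltz$ is \emph{not} inner, so the splitting $\Tpltz \rtimes \U(1) \cong \Tpltz \otimes C^*(\U(1))$ does not hold. For an action to be inner the implementing unitaries must lie in the multiplier algebra, which for the unital algebra $\Tpltz$ is $\Tpltz$ itself; but $U_t = \mathrm{diag}(1, e^{2\pi i t}, e^{4\pi i t}, \ldots)$ is not of the form $T_f + K$ with $f \in C(S^1)$ and $K$ compact unless $t = 0$, since the diagonal entries of such an operator converge to $\hat f(0)$ while $e^{2\pi i n t}$ does not converge. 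More structurally: an inner action on $\Tpltz$ would descend to an inner, hence trivial, action on the commutative quotient $C(S^1)$, whereas the gauge action descends to the nontrivial rotation action; and indeed $C(S^1)\rtimes_{\mathrm{rot}}\U(1) \cong \cptops(L^2(S^1))$, which is irreconcilable with the tensor splitting (the action \emph{is} inner on the ideal $\cptops \subset \Tpltz$, which is presumably the source of the slip). As written, the identification of $\iota_*$, and with it the entire quantum case, collapses.

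The UCT strategy is repairable, but not by this shortcut: apply the equivariant six-term sequence to $0 \to \cptops \to \Tpltz \to C(S^1) \to 0$, using that the action on the ideal is inner, so $K^{\U(1)}_*(\cptops(\ell^2\N)) \cong R(\U(1))$ generated by the projection $p_0$ onto $\C e_0$, whose image in $K_0^{\U(1)}(\Tpltz)$ is $[1-SS^*] = (1-\chi)[1]$, and that $K^{\U(1)}_*(C(S^1)) \cong (\Z, 0)$ generated by $[1]$. This yields $K^{\U(1)}_*(\Tpltz) \cong (R(\U(1)), 0)$ free on $[1_\Tpltz]$, so $\iota_*$ is an isomorphism of $R(\U(1))$-modules; your bootstrap-plus-UCT step (membership in the equivariant bootstrap class also deserves a word, but follows from the same extension) then closes the argument. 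Note, however, that the paper does none of this: it carries out precisely the ``alternative route'' you mention in passing at the end, namely Pimsner's explicit graded bimodule on $\ell^2\N \oplus \ell^2\N_+$ with the truncated representation on the second summand and the transposition as the Fredholm operator (which is already $\U(1)$-equivariant since both summands carry the restriction of the same weights), verifying both Kasparov products directly by an explicit equivariant homotopy of representations and thereby avoiding the UCT in this lemma altogether.
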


\begin{proof}
Suppose that the assertion is verified for $k = 1$, with the inverse $\beta$ of $\iota$ in $\KK^{\U(1)}(C(\overline{\Disc}_q), \C)$.  Then the morphism $\beta^{\otimes k}$ will be the inverse of~\eqref{eq:q-disc-prod-scal-emb}.  Hence it is enough to prove the assertion for $k = 1$.

We first deal with the case $q = 1$.  The evaluation at the origin $\ev_0$ defines a $\U(1)$-equivariant extension $C_0(\overline{\Disc} \setminus \ensemble{0}) \rightarrow C(\overline{\Disc}) \rightarrow \C$.  Since the kernel $C_0(\overline{\Disc} \setminus  \ensemble{0})$ is equivariantly contractible, $\ev_0$ induces a $\KK^{\U(1)}$-equivalence which is obviously inverse to $\iota$.

Next we consider the case $q < 1$.  Then we may use the following $\U(1)$-equivariant graded Kasparov $\Tpltz$-$\C$-bimodule $\beta$, already considered by Pimsner in the non-equivariant context~\cite{MR1426840}*{Definition~4.3 and Theorem~4.4}.  His argument applies to our setting without change, but we review it for the reader's convenience.

The graded Hilbert space for $\beta$ is given by $\ell^2 \N \oplus \ell^2 \N_+$, endowed with the action of $\U(1)$ defined by~\eqref{eq:torus-action-on-ell-2-N} on each direct summand.  We consider the natural action $\pi$ of $\Tpltz$ on the first summand $\ell^2 \N$.  On the second one, it is given by the truncation $\chi_{\N_+} \pi \chi_{\N_+}$.  The odd Fredholm operator for $\beta$ is given by the transposition of the two summands, and $\delta_0 \oplus 0 \mapsto 0$.

On the one hand, it can be easily seen that the Kasparov product $[\iota] \otimes_{\Tpltz} \beta$ equals the identity morphism of $\KK^T(\C, \C)$.  On the other hand, the morphism $\beta \otimes_{\C} [\iota] - [\Id_{\Tpltz}]$ is represented by the following Hilbert $\Tpltz$-bimodule.  The graded space is $(\ell^2 \N \otimes \Tpltz) \oplus (\ell^2 \N \otimes \Tpltz)$ endowed with the diagonal action $(U_t \otimes \Adj_{U_t})^{\oplus 2}$ of $\U(1)$.  The right action of $\Tpltz$ is given by the right multiplication action on the second tensor component of each direct summand.  The left action of $\Tpltz$ on the first copy of $\ell^2 \N \otimes_{\C} \Tpltz$ is given by $\pi \otimes \Id_{\Tpltz}$, and on the second copy by
\begin{equation}
\label{eq:left-action-on-beta-iota}
x \mapsto \chi_{\ensemble{0}} \otimes \pi_l(x) + (\chi_{\N_+} \pi(x) \chi_{\N_+}) \otimes \Id_{\Tpltz},
\end{equation}
where $\pi_l$ is the left multiplication action of $\Tpltz$ on itself.  By a $\Tpltz$-compact perturbation, the `$\Tpltz$-Fredholm' operator can be given by the permutation of two direct summands.

Now, we construct a continuous homotopy between~\eqref{eq:left-action-on-beta-iota} and $\pi \otimes \Id_{\Tpltz}$.  For each $t \in [0, 1]$, consider the operator
\begin{align*}
S_t \delta_0 \otimes x &= \cos(\frac{\pi}{2}t) \delta_1 \otimes x + \sin(\frac{\pi}{2} t) \delta_0 \otimes S x, &
S_t \delta_k \otimes x &= \delta_{k+1} \otimes x \quad (x \in \Tpltz, k \in \N_+).
\end{align*}
Then one has a homomorphism $\pi_t\colon \Tpltz \rightarrow \mathcal{L}(\mathcal{E}_{\Tpltz})$ by $\pi_t(S) = S_t$ at each $t$.  They satisfy $\pi_t - \pi_{t'} \in \cptops(\mathcal{E}_{\Tpltz})$, and from the choice of $S_t$ above it follows that the connecting morphisms $\pi_t$ remain $\U(1)$-equivariant.  The morphism $\pi_1$ agrees with~\eqref{eq:left-action-on-beta-iota}, while the one $\pi_0$ equals the degenerate representation $(\pi \otimes \Id_{\Tpltz}) \oplus (\pi \otimes \Id_{\Tpltz})$.  Hence we obtain $\beta \otimes_\C \iota - [\Id_\Tpltz] = 0$ in $\KK^{\U(1)}(\Tpltz, \Tpltz)$.
\end{proof}

Consider the action of $\U(1)$ on $\Gamma_I(C(\overline{\Disc}_q))$ defined by
\begin{align*}
\sigma_t(Q) &= Q, &
\sigma_t(Z) &= e^{2 \pi i t} Z.
\end{align*}
This is a fiberwise action over $I$, given by $\Adj_{U_t}$ at each $q \in [0, 1)$ modulo the representation~\eqref{eq:q-disc-Toeplitz-corr}, and the rotation of the disc around the origin at $q = 1$.

In the following proposition we observe that the evaluation map at any point induces an equivariant $\KK$-equivalence.  Note that the $\KK$-equivalence without the torus action was proved in~\cite{arXiv:1103.4346}*{Lemma~6.3}.

\begin{prop}
\label{prop:U1-equiv-quantum-disc-bundle-eval}
Let $k \in \N$ and $i_1, \ldots i_k$ be positive integers.  For any $q_0 \in I$, the evaluation map $\ev_{q_0}$ for the $\U(1)^k$-$C(I)$-algebra 
\begin{equation}
\label{eq:open-disc-prod-bundle}
\Gamma_{I}\left(C_0(\Disc_{q^{i_1}})  \otimes \cdots \otimes C_0(\Disc_{q^{i_k}})\right)
\end{equation}
is a $\KK^{\U(1)^k}$-equivalence.  The $K_0^{\U(1)^k}$-group of this algebra is the free $R(\U(1)^k)$-module of rank one.
\end{prop}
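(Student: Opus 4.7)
The plan is to reduce to the case $k = 1$ by induction on the number of factors, and to handle the base case by combining a six-term exact sequence with the fibrewise equivalence already furnished by Lemma~\ref{lem:q-disc-equivar-equiv-pt}. For the inductive step, I rely on the identification
\[
\Gamma_I\bigl(C_0(\Disc_{q^{i_1}}) \otimes \cdots \otimes C_0(\Disc_{q^{i_k}})\bigr) \simeq \Gamma_I(C_0(\Disc_{q^{i_1}})) \otimes_{C(I)} \cdots \otimes_{C(I)} \Gamma_I(C_0(\Disc_{q^{i_k}})),
\]
which follows from nuclearity of the fibres and the continuous-field construction of Section~\ref{sec:cont-field-quant-homog-sps}; the $C(I)$-linear $\KK^{\U(1)}$-equivalences on each factor assemble into a $\KK^{\U(1)^k}$-equivalence for the product action by the standard external product argument.

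For the base case $k = 1$, consider the $\U(1)$-equivariant short exact sequence of $C(I)$-algebras
\[
0 \to \Gamma_I(C_0(\Disc_{q^{i_1}})) \to \Gamma_I(C(\overline{\Disc}_{q^{i_1}})) \to C(I) \otimes C(S^1) \to 0
\]
and its fibre $0 \to C_0(\Disc_{q_0^{i_1}}) \to C(\overline{\Disc}_{q_0^{i_1}}) \to C(S^1) \to 0$ at $q_0$. The evaluation $\ev_{q_0}$ intertwines the two six-term exact sequences in $\KK^{\U(1)}$-theory. The rightmost columns match because $C(I)$ is $\KK^{\U(1)}$-equivalent to $\C$ by contractibility of $I$ carrying the trivial action. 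If the middle column $\Gamma_I(C(\overline{\Disc}_{q^{i_1}})) \to C(\overline{\Disc}_{q_0^{i_1}})$ is also a $\KK^{\U(1)}$-equivalence, then the five lemma in the triangulated category $\KK^{\U(1)}$ (cf.~\cite{MR2193334}) forces the ideal map $\ev_{q_0}$ to be a $\KK^{\U(1)}$-equivalence, as required.

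The crucial and most delicate step is the closed disc bundle claim: that the canonical embedding $C(I) \hookrightarrow \Gamma_I(C(\overline{\Disc}_{q^{i_1}}))$ sending $f \mapsto f(Q)$ is a $\KK^{\U(1)}$-equivalence. Fibrewise this reduces to Lemma~\ref{lem:q-disc-equivar-equiv-pt} (for $k = 1$, after the innocuous reparameterization $q \leftrightarrow q^{i_1}$). The obstacle is that the Kasparov inverses in that lemma are geometrically different at $q = 1$ (evaluation at the origin) versus $q < 1$ (the Pimsner--Toeplitz bimodule), which blocks any naive patching into a global family. I would circumvent this by invoking the equivariant Universal Coefficient Theorem of~\cite{MR849938}: the continuous field $\Gamma_I(C(\overline{\Disc}_{q^{i_1}}))$ lies in the $\U(1)$-equivariant bootstrap class, being a continuous field of type~I algebras over $I$ with central copy $C(I)$. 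The embedding induces an $R(\U(1))$-module isomorphism on $K^{\U(1)}_\ast$ by contractibility of $I$ together with the fibrewise Lemma, and the UCT upgrades this isomorphism to a $\KK^{\U(1)}$-equivalence.

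The final assertion on the $R(\U(1)^k)$-module rank follows by computing $K^{\U(1)^k}_0$ at the convenient fibre $q_0 = 0$, where the algebra specializes to $\cptops^{\otimes k}$ with the diagonal $\Adj_{U_{t_1}} \otimes \cdots \otimes \Adj_{U_{t_k}}$-action. Each factor is $\U(1)$-equivariantly Morita equivalent to $\C$ via the Hilbert module $\ell^2\N$ carrying the representation~\eqref{eq:torus-action-on-ell-2-N}, so $K^{\U(1)}_0(\cptops) \simeq R(\U(1))$ is free of rank one, generated by the Morita class. The equivariant Künneth theorem for nuclear bootstrap algebras then gives $K^{\U(1)^k}_0 \simeq R(\U(1)^k)$ free of rank one.
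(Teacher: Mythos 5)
Your base case $k=1$ and your rank computation at the fibre $q_0=0$ (via the equivariant Morita equivalence $\cptops\sim\C$ implemented by $\ell^2\N$) are essentially sound, and in fact the base case mirrors the paper's own argument in miniature: circle-quotient extension, the unit class as a generator, the equivariant UCT for the closed-disc field, and two-out-of-three in the triangulated category. The genuine gap is the reduction from general $k$ to $k=1$. You propose to assemble the single-factor equivalences over the balanced tensor product $\otimes_{C(I)}$ ``by the standard external product argument,'' but a fibrewise $\KK^{\U(1)}$-equivalence of $C(I)$-algebras does not pass to $\otimes_{C(I)}$: the external Kasparov product of the inverses only lands in the tensor product over $\C$, and the quotient onto the balanced tensor product is not an epimorphism in $\KK$. (A cheap illustration of the failure: $\ev_0\colon C(I)\to\C$ is a $\KK$-equivalence, yet applying $-\otimes_{C(I)}B$ with $B=\C$ regarded as a $C(I)$-algebra via $\ev_1$ turns it into the zero map $\C\to 0$.) What you actually need is that each inverse is $C(I)$-linear, i.e.\ that each single-disc field is an equivariant $\RKK(I;-,-)$-fibration. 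That is strictly stronger than what your base case delivers; non-equivariantly it follows from \cite{MR2511635}, but the equivariant analogue with the nontrivial rotation action on the discs is not off the shelf --- note that Remark~\ref{rmk:right-action-RKK-fib} of the paper only records the $\RKK^{T/T_L}$-fibration property, where the group acts trivially on the disc factors, and derives it \emph{from} the proposition you are trying to prove.

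The paper avoids this by never leaving the $\U(1)^k$-equivariant category: it runs a double induction on $(k,m)$ over the algebras $A_{k,m}$ with $m$ open and $k-m$ closed disc factors, converting one closed factor to an open one at a time via the extension $A_{k,m+1}\to A_{k,m}\to B$ (with a circle factor appearing in $B$), applying the equivariant UCT only once, to the all-closed field $A_{k,0}$, and propagating the equivalence through mapping-cone triangles. To salvage your structure you would have to prove equivariant $\RKK(I)$-invertibility of the single-factor evaluations directly; otherwise the simultaneous induction is the way to go. Two smaller points: your claim that $K^{\U(1)}_*(\Gamma_I(C(\overline{\Disc}_q)))\simeq R(\U(1))$ follows ``by contractibility of $I$'' is too quick --- continuous fields over contractible bases need not have the $K$-theory of a fibre; the working argument is that $\ker(\ev_1)$ consists of $C_0$-sections over $[0,1)$ of a constant field and is therefore equivariantly contractible. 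Likewise, membership in the equivariant bootstrap class does not follow from the fibres being type I; it follows from the field being $\KK^{\U(1)}$-equivalent to $\C$.
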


\begin{proof}
Without the loss of generality we may simplify to the case $i_1 = \cdots = i_k = 1$.  The assertion is equivalent to that the kernel of $\ev_{q_0}$ is $\KK^{\U(1)^k}$-contractible.  When $q_0 = 1$, the kernel of $\ev_{1}$ is the space of $C_0$-sections of the constant field over $[0, 1)$ with fiber $C_0(\Disc_0)^{\otimes k}$, which is equivariantly contractible.

Suppose that $q_0 < 1$.  We have the decomposition
\[
\ker(\ev_{q_0}) = \ker(\ev_{q_0} |_{\Gamma_{[0, q_0]}(C_0(\Disc_q))}) \oplus \ker(\ev_{q_0} |_{\Gamma_{[q_0, 1]}(C_0(\Disc_q))}).
\]
By the triviality of the field $(C_0(\Disc_q))_{q \in [0, 1)}$, the first summand of the right hand side is equivariantly contractible.  By reparametrization, the left hand side is isomorphic to $\ker(\ev_0)$.  It follows that we may suppose $q_0 = 0$.  

For $0 \le m \le k$, let $A_{k,m}$ denote the $\U(1)^k$-$C(I)$-algebra
\begin{equation*}
\Gamma_{I}\big(\underbrace{C_0(\Disc_q) \otimes \cdots \otimes C_0(\Disc_q)}_{m \times}
\otimes
\underbrace{ C(\overline{\Disc}_q) \otimes \cdots \otimes C(\overline{\Disc}_q)}_{ (k - m) \times}\big).
\end{equation*}
Its fiber at $q$ is denoted by $A^{(q)}_{k, m}$.  This algebra is isomorphic to $C_0(\Disc_q)^{\otimes m} \otimes C(\overline{\Disc}_q)^{\otimes (k-m)}$.

We argue by induction on $k$ and $m$ that $K^{\U(1)^k}(A^{(q)}_{k, m}) \simeq R(\U(1)^k)$ and that the evaluation map $\ev_0$ induces an isomorphism of $R(\U(1)^k)$-modules.

First, let us consider the case $m = 0$.  By the first part of this proof and Lemma~\ref{lem:q-disc-equivar-equiv-pt}, the $K^{\U(1)^k}_0$-groups of both $\Gamma_{I}(C(\overline{\Disc}_q)^{\otimes k})$ and $C(\overline{\Disc}_0)^{\otimes k}$ are isomorphic to $R(\U(1)^k)$.  The evaluation map sends the class of unit of $\Gamma_I(C(\overline{\Disc}_q)^{\otimes k})$ to that of $C(\overline{\Disc}_0)^{\otimes k}$, which are basis of the $K^{\U(1)^{\otimes k}}_0$-groups.

Now, the $\U(1)^k$-algebra $A_{k,0}$ is in the equivariant bootstrap class by Lemma~\ref{lem:q-disc-equivar-equiv-pt}.  Since it has an $R(\U(1)^k)$-projective $K^{\U(1)^k}_*$-group, we can apply the equivariant Universal Coefficient Theorem~\cite{MR849938}*{Theorem~10.1} to conclude that $\ev_0$ is a $\KK^{\U(1)^k}$-equivalence when $m = 0$.

Next, suppose that the assertion was proved for the continuous fields $A_{k',m'}$ satisfying $k' < k$ and $A_{k, m'}$ for $m' \le m$.
Let us first verify that the map $\ev_{0}$ induces an isomorphism between the $\U(1)^k$-equivariant $K_0$-groups of $A_{k,m}$ and $A^{(0)}_{k,m}$.  We have the extension
\begin{equation}
\label{eq:A-k-m-induction-diag}
 \begin{CD}
 A_{k,m+1} @>>> A_{k,m} @>>> B \\
 @V{\ev_0}VV @V{\ev_0}VV @V{\ev_0}VV \\
 A^{(0)}_{k,m+1} @>>> A^{(0)}_{k,m} @>>> B^{(0)}
 \end{CD},
\end{equation}
where $B$ is the $\U(1)^k$-$C(I)$-algebra defined by
\begin{equation*}
B = \Gamma_{I}\big (\underbrace{C_0(\Disc_q) \otimes \cdots \otimes C_0(\Disc_q)}_{m \times} \otimes C(S^1) \otimes \underbrace{C(\overline{\Disc}_q) \otimes \cdots \otimes C(\overline{\Disc}_q)}_{ (k - m - 1) \times} \big ).
\end{equation*}

Now, the evaluation map for $B$ is the tensor product of the maps $\ev_0$ for the $C(I)$-algebras $A_{k-1,m}$ and $C(S^1) \otimes C(I)$.  By the induction assumption, the former is a $\KK^{\U(1)^{k-1}}$-isomorphism.  The latter is trivially an $\KK^{\U(1)}$-isomorphism.  Hence $\ev_0$ for $B$ is an $\KK^{\U(1)^k}$-isomorphism.  Next, the evaluation for $A_{k, m}$ is again an $\KK^{\U(1)^k}$-isomorphism by the induction assumption.

Since the algebras $B$ and $B^{(0)}$ are nuclear, the rows in the diagram~\eqref{eq:A-k-m-induction-diag} are split extensions of $\U(1)^k$-algebras.  Hence these can be identified with (parts of) mapping cone triangles~\cite{MR2193334}*{Section~2.3}.  Since the category $\KK^{\U(1)^k}$ is an triangulated category with the mapping cone triangles as its exact triangles~\cite{MR2193334}*{Proposition~2.1}, the evaluation map for $A_{k, m+1}$ is also an $\KK^{\U(1)^k}$-isomorphism.

It remains to show that the $\U(1)^k$-equivariant $K$-group of $A_{k, m+1}$ is a free $R(\U(1)^k)$-module of rank one.  On the one hand, the Green--Julg isomorphism implies
\[
K^{\U(1)^k}_*(B^{(0)}) \simeq K_*(\U(1)^k \ltimes B^{(0)}) \simeq K_*(\U(1)^{k-1} \ltimes A_{k-1,m}) \otimes \U(1) \ltimes C(S^1).
\]
The right hand side of this formula is isomorphic to $K^{\U(1)^{k-1}}_*(A_{k-1, m})$, which is in turn isomorphic to $R(\U(1)^{k-1})$ as an $R(\U(1)^{k-1})$-module by the induction hypothesis.  The structure of the $R(\U(1)^k)$-module on it is induced by the group homomorphism
\[
\phi_m\colon \U(1)^k \rightarrow \U(1)^{k-1}, (t_1, \ldots, t_k) \mapsto (t_1, \ldots, t_m, t_{m+2}, \ldots, t_k).
\]
On the other hand, by the induction hypothesis for $A_{k,m}$, we also know that $K^{\U(1)^k}_*(A_{k,m}^{(0)})$ is isomorphic to $R(\U(1)^k)$.  Applying the functor $K^{\U(1)^k}_*$ to the bottom row of~\eqref{eq:A-k-m-induction-diag}, we obtain a $6$-term exact sequence
\[
\begin{CD}
K^{\U(1)^k}_0(A_{k, m+1}) @>>> R(\U(1)^k) @>{(\phi_m)_*}>> R(\U(1)^{k-1}) \\
@AAA & & @VVV \\
0 @<<< 0 @<<< K^{\U(1)^k}_1(A_{k, m+1})
\end{CD}.
\]
From this we obtain $K^{\U(1)^k}_0(A_{k, m+1}) \simeq R(\U(1)^k)$ and $K^{\U(1)^k}_1(A_{k, m+1}) \simeq 0$, which proves the assertion.
\end{proof}

\subsection{Equivariant comparison of quantum homogeneous spaces}
\label{sec:equivar-fam-of-quant-homog-sp}

For each simple root $\alpha_i$, there is a $*$-Hopf algebra homomorphism $\UnivEnv_{q_i}(\liealg{sl}_2) \rightarrow \UnivEnv_q(\liealg{g})$ defined by
\begin{align*}
E &\mapsto E_i, &
F &\mapsto F_i, &
K &\mapsto K_i.
\end{align*}
Its transpose extends to a $*$-homomorphism $\sigma_i\colon C(G_q) \rightarrow C(\SU_{q_i}(2))$.  Then $\pi_i = \rho_{q_i} \sigma_i$ is a representation of $C(G_q)$ on $\ell^2 \N$.

By construction the operators generated by the torus $\U(1)$ in $\UnivEnv(\SU_{q_i}(2))$ is mapped to the subalgebra generated by $K_i$ in $\UnivEnv(G_q)$.

Let $a_i$ denote the $i$-th column vector $(a_{j, i})_{j \in \Pi}$ of the Cartan matrix of $\liealg{g}$.  It defines a homomorphism $\alpha_i\colon T \rightarrow \U(1)$.  By the relations~\eqref{eq:q-env-def-torus-rel} and $q_i^{a_{i, j}} = q_j^{a_{j, i}}$, the homomorphism $\sigma_i$ intertwines the adjoint action of $T$ on $C(G_q)$ and the one $\tau_{\alpha_i(t)}$ on $C(\SU_{q_i}(2))$ by
\begin{equation}
 \label{eq:sigma-i-equivariance}
 \sigma_i(\Adj_t(f)) = \tau_{\alpha_i (t)}(\sigma_i(f)).
\end{equation}

For each element $w$ of $W$, we shall choose and fix an reduced presentation
\[
w = s_{i_1(w)} \cdots s_{i_k(w)}.
\]
Then we obtain an irreducible representation $\pi_w$ of $C(G_q)$ on $\ell^2 \N^{\otimes k}$ defined by
\[
\pi_w = (\pi_{i_1(w)} \otimes \cdots \otimes \pi_{i_k(w)}) \Delta_q^{(k-1)}.
\]
For the neutral element $e \in W$, the representation $\pi_e$ is understood to be the counit map $\epsilon\colon C(G_q) \rightarrow \C$.  Finally, we put $\pi_{w, t} = \pi_w \circ R_t$ for each $t \in T$.  The representations $(\pi_{w, t})_{w \in W, t \in T}$ give a complete parametrization of the irreducible representations of $C(G_q)$~\cite{MR1614943}.

Let us describe the relationship between $L_t$ and $\pi_{s_i, t}$.  By~\eqref{eq:sigma-i-equivariance} and the equivariance of $\rho_{q_i}$ for $\sigma_t$ and $\Adj_{U_t}$, we obtain
 \[
\pi_i(L_t(f)) = \Adj_{U_{\alpha_i (t)}} \pi_i R_{2 \iota_i (\alpha_i (-t)) + t}(f),
 \]
 where $\iota_i$ is the inclusion of $\U(1)$ in $T$ corresponding to the subgroup $e^{i h_i} \subset T$.  The expression $s_i(t) = 2 \iota_i (\alpha_i (-t)) + t$ is nothing but the standard action of $s_i$ on $T$, which extends to an action of $W$.

Iterating the above argument using $\Delta_q^{(m-1)} L_t = (L_t \otimes \iota^{\otimes m-1}) \Delta_q^{(m-1)}$ and $(R_t \otimes \iota) \Delta_q = (\iota \otimes L_t) \Delta_q$, we obtain the following lemma.

\begin{lem}[c.f.~\cite{arXiv:1103.4346}*{Lemma~3.4}]
\label{lem:quant-schubert-cell-transl}
Let $0 < q < 1$, $w \in W$ and $t \in T$ be given.  For $1 \le k \le m = l(w)$, put $x_k = \alpha_{i_k}(s_{i_{k-1}(w)} \cdots s_{i_1(w)}(t))$.  Then we have
\begin{equation}
\label{eq:sigma-w-equivariance-tau}
\pi_w (L_t(f))
= \left( (\Adj_{U_{x_1}} \circ \pi_{i_1(w)}) \otimes \cdots \otimes (\Adj_{U_{x_m}} \circ \sigma_{i_m(w)})\right) \Delta_q^{(k-1)}(R_{w(t)}f).
\end{equation}
\end{lem}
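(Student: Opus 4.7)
The plan is to prove this by direct iteration of the single-reflection identity $\pi_i L_t = \Adj_{U_{\alpha_i(t)}} \pi_i R_{s_i(t)}$ established immediately above the lemma. Since $\pi_w$ is built from $\pi_{i_1(w)}, \ldots, \pi_{i_m(w)}$ via $\Delta_q^{(m-1)}$, I would process one tensor slot at a time, using the two cocycle identities highlighted in the text—namely $\Delta_q^{(m-1)} L_t = (L_t \otimes \iota^{\otimes m-1}) \Delta_q^{(m-1)}$ and $(R_t \otimes \iota)\Delta_q = (\iota \otimes L_t)\Delta_q$—to propagate the translation across the iterated coproduct.

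Concretely, I would begin by applying the first cocycle identity to push $L_t$ onto the first slot of $\Delta_q^{(m-1)}(f)$, then apply the single-reflection equivariance in that slot to rewrite $\pi_{i_1(w)} L_t$ as $\Adj_{U_{\alpha_{i_1(w)}(t)}} \pi_{i_1(w)} R_{s_{i_1(w)}(t)}$. The leftover $R_{s_{i_1(w)}(t)}$ on the first slot is then converted, via the second cocycle identity applied inside $\Delta_q^{(m-1)}$ (using coassociativity so that the identity can be invoked at any position), into $L_{s_{i_1(w)}(t)}$ on the second slot. Repeating this two-move cycle $m$ times produces the chain of prefactors $\Adj_{U_{x_k}} \pi_{i_k(w)}$ in the claimed order, leaving a single residual right translation on $f$ whose torus parameter is the accumulated Weyl reflection applied to $t$.

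The only real work is bookkeeping: one must check by induction on $k$ that after the $k$-th iteration the surviving torus element is exactly $s_{i_k(w)} \cdots s_{i_1(w)}(t)$, so that the next $U$-subscript is $\alpha_{i_{k+1}(w)}(s_{i_k(w)} \cdots s_{i_1(w)}(t)) = x_{k+1}$, matching the definition in the statement. This is immediate because each second-cocycle move carries the torus parameter unchanged, while each equivariance application composes a fresh reflection $s_{i_{k+1}(w)}$ on the left. After $m$ iterations the parameter on the residual $R$ is $s_{i_m(w)} \cdots s_{i_1(w)}(t)$, which under the paper's convention for the Weyl action on $T$ equals $w(t)$, recovering the claimed $R_{w(t)}$.

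The main obstacle is entirely formal: it is the careful application of coassociativity that justifies using the second cocycle identity $(R_t \otimes \iota)\Delta_q = (\iota \otimes L_t)\Delta_q$ at an arbitrary internal slot of $\Delta_q^{(m-1)}$, together with the inductive tracking of the torus parameter at each step. No new analytic input is required beyond the single-reflection equivariance and the Hopf algebra identities already displayed.
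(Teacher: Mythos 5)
Your proof is correct and is precisely the argument the paper intends: the lemma is presented as the result of iterating the single-reflection identity $\pi_i(L_t(f)) = \Adj_{U_{\alpha_i(t)}}\pi_i R_{s_i(t)}(f)$ using the two coproduct identities $\Delta_q^{(m-1)}L_t = (L_t\otimes\iota^{\otimes m-1})\Delta_q^{(m-1)}$ and $(R_t\otimes\iota)\Delta_q = (\iota\otimes L_t)\Delta_q$, exactly as you do. Your inductive bookkeeping of the accumulated reflections $s_{i_k(w)}\cdots s_{i_1(w)}$ and the resulting subscripts $x_k$ simply supplies the detail the paper leaves implicit.
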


As in Section~\ref{sec:poisson-subgrp}, let $S$ be a subset of $\Pi$ and $L$ be a subgroup of $P^c(S)$.  Let $W^S$ denote the set of elements $w \in W$ satisfying $w(\alpha_i) \in P$ for all $\alpha_i \in S$.  For each $0 \le m \le m_0$, put
\[
\ideal{J}^{(q)}_m = \cap \ensemble{\ker \pi_{w,t} \cap C(G_q/K^{S,L}) \suchthat w \in W^S, l(w) = m, t \in T}.
\]
It is a $T \times (T/T_L)$-invariant bilateral ideal of $C(G_q/K^{S,L})$.  By~\cite{arXiv:1103.4346}*{Theorem~3.1}, we obtain a composition sequence
\begin{equation}
\label{eq:q-flag-compos-seq}
0 = \ideal{J}^{(q)}_{m_0} \subset \ideal{J}^{(q)}_{m_0 - 1} \subset \cdots \subset \ideal{J}^{(q)}_0 \subset \ideal{J}^{(q)}_{-1} = C(G_q/K^{S,L}_q),
\end{equation}
satisfying
\begin{equation}
\label{eq:q-flag-compos-seq-subquot}
\ideal{J}^{(q)}_{m-1}/\ideal{J}^{(q)}_m \simeq \bigoplus_{\substack{w \in W^S,\\l(w) = m}} C(T/T_L) \otimes C_0(\Disc_{q_{i_1(w)}}) \otimes \cdots \otimes C_0(\Disc_{q_{i_m(w)}}),
\end{equation}
given by the surjective map $a \mapsto (t \mapsto \pi_{w,t}(a))_w$ from $\ideal{J}^{(q)}_{m-1}$ to the right hand side.

\begin{rmk}
\label{rmk:T-action-adj-and-right-transl}
By the definitions of $\pi_{w,t}$ and the isomorphism~\eqref{eq:q-flag-compos-seq-subquot}, the left translation action of $t \in T$ on $\ideal{J}^{(q)}_{m-1}/\ideal{J}^{(q)}_m$ corresponds to
\begin{equation}
\label{eq:left-transl-on-q-schubert-cells}
\bigoplus_{\substack{w \in W^S,\\l(w) = m}} L^{(T/T_L)}_{w(t)} \otimes \sigma_{\alpha_{i_1(w)}(t)} \otimes \sigma_{\alpha_{i_2(w)}(s_{i_1(w)}t)} \otimes \cdots \otimes \sigma_{\alpha_{i_m(w)}(s_{i_{m-1}(w)} \cdots s_{i_1(w)}t)},
\end{equation}
where $L^{(T/T_L)}_t$ denotes the translation on $T/T_L$ by $t \in T$.  The right translation action of $T/T_L$ induces the translation action on $C(T/T_L)$ and the trivial action on the rest.
\end{rmk}

For the case $q = 1$, we have the embeddings $\gamma_{i}\colon \SU(2) \rightarrow G$ associated to each $i \in \Pi$.  By composing with the embedding of $\Disc$ into $\SU(2)$ given by
\[
z \rightarrow \left (
  \begin{array}{cc}
    \overline{z} & \sqrt{ 1 - \absolute{z}^2} \\
    - \sqrt{ 1 - \absolute{z}^2} & z
  \end{array} \right ),
\]
we obtain embeddings of discs
\[
\gamma_w \colon \Disc^k \rightarrow G, \quad (z_1, \ldots, z_k) \mapsto \gamma_{i_1(w)}(z_1) \cdots \gamma_{i_k(w)}(z_k).
\]
 The adjoint action of $T$ on $\Img(\gamma_w)$ is given by the following action determined by the maps $\alpha_{i_1(w)}, \ldots , \alpha_{i_k(w)}$ which is linear with respect to the coordinate given by $\gamma_w$~\cite{MR1726697}*{Proposition~5}:
\begin{multline}
\label{eq:gamma-w-equivariance}
t. \gamma_w(z_1, \ldots, z_k) = \\
\gamma_w\left (\alpha_{i_1(w)}(t) . z_1, \alpha_{i_2(w)}(s_{i_{1}(w)} t) . z_k \ldots, \alpha_{i_k(w)}(s_{i_{k-1}(w)} \cdots s_{i_{1}(w)} t) . z_k \right ).w(t).
\end{multline}
Thus, we may interpret Lemma~\ref{lem:quant-schubert-cell-transl} as an analogue of this formula in the quantum setting.

For each $w \in W$, we consider a continuous $T$-$C(I)$-algebra
\[
A_w = \Gamma_{I}\big (C_0(\Disc_{q_{i_1(w)}}) \otimes \cdots \otimes C_0(\Disc_{q_{i_k(w)}})\big)
\]
whose fiber is given by
\[
A^{(q)}_w = C_0(\Disc_{q_{i_1(w)}}) \otimes \cdots \otimes C_0(\Disc_{q_{i_k(w)}}) = \Img( \pi_w),
\]
endowed with the action of $T$ given by
\[
\sigma_{\alpha_{i_1(w)}(t)} \otimes \sigma_{\alpha_{i_2(w)}(s_{i_1(w)}t)} \otimes \cdots \otimes \sigma_{\alpha_{i_m(w)}(s_{i_{m-1}(w)} \cdots s_{i_1(w)}t)}
\]
for $q < 1$ and by~\eqref{eq:gamma-w-equivariance} at $q = 1$.

\begin{prop}
\label{prop:q-Disc-prod-bundle-eval-equivar-KK-equiv}
 Let $w \in W$ and $J$ be a closed subinterval of $I$.  Then the evaluation map $\Gamma_J(A^{(q)}_w) \rightarrow A^{(q_0)}_w$ induces a $\KK^T$-equivalence for each $q_0 \in J$.
\end{prop}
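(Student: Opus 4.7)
The plan is to reduce the statement to Proposition~\ref{prop:U1-equiv-quantum-disc-bundle-eval} by factoring the $T$-action on $A_w$ through a $\U(1)^k$-action and then pulling back along the corresponding group homomorphism, using that restriction along a group homomorphism is a functor on equivariant $\KK$-categories that preserves equivalences.

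First, I would introduce the group homomorphism
\[
\phi_w\colon T \to \U(1)^k, \qquad t \mapsto \bigl(\alpha_{i_1(w)}(t),\, \alpha_{i_2(w)}(s_{i_1(w)} t),\, \ldots,\, \alpha_{i_k(w)}(s_{i_{k-1}(w)} \cdots s_{i_1(w)} t)\bigr).
\]
For $q < 1$, the $T$-action on $A_w^{(q)}$ is by definition the pullback along $\phi_w$ of the tensor product of the rotation actions $\sigma_{(\cdot)}$ on the noncommutative disc factors. For $q = 1$, inspection of~\eqref{eq:gamma-w-equivariance} shows that the disc coordinates $(z_1,\ldots,z_k)$ are rotated by exactly the components of $\phi_w(t)$, while the trailing factor $w(t)$ acts only on the torus direction of $G$ and is invisible once one restricts to the image of $\gamma_w$ in the Schubert-cell coordinates. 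Together with the analogous statement over $J$ (which is automatic from the continuous-field construction), this promotes $A_w$ to a $\U(1)^k$-$C(J)$-algebra whose $T$-structure is recovered by restriction along $\phi_w$.

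Next, I would invoke Proposition~\ref{prop:U1-equiv-quantum-disc-bundle-eval} for this enhanced $\U(1)^k$-structure with the base interval $J$ in place of $I$. The argument given there — the induction on $(k,m)$ using the mapping cone triangles in~\eqref{eq:A-k-m-induction-diag}, the Green--Julg computation, and the equivariant UCT — is local in the base and goes through verbatim over any closed subinterval of $I$; the only ingredient needed is that the field $(C_0(\Disc_{q_i}))_{q \in J}$ is trivial away from any point of $J$ at which $q = 0$, which is what allowed the reduction to the case $q_0 = 0$ in the original proof. One concludes that $\ev_{q_0}\colon \Gamma_J(A_w) \to A_w^{(q_0)}$ is a $\KK^{\U(1)^k}$-equivalence.

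Finally, $\phi_w$ induces a restriction functor $\Res_{\phi_w}\colon \KK^{\U(1)^k} \to \KK^T$ which plainly preserves equivalences, and applying it yields the desired $\KK^T$-equivalence. The only genuinely delicate point is the verification, at $q = 1$, that the $T$-action read off from~\eqref{eq:gamma-w-equivariance} really does factor through $\U(1)^k$ via $\phi_w$ and matches continuously with the quantum-disc actions to make $A_w$ a $\U(1)^k$-$C(I)$-algebra; once this compatibility is checked, the rest is a clean appeal to Proposition~\ref{prop:U1-equiv-quantum-disc-bundle-eval} and the functoriality of restriction.
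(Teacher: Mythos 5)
Your proposal is correct and follows essentially the same route as the paper: the paper's proof likewise identifies $A_w$ as the $\U(1)^k$-$C(I)$-algebra of Proposition~\ref{prop:U1-equiv-quantum-disc-bundle-eval} (with integer sequence $((\alpha_{i_j(w)},\alpha_{i_j(w)})/2)_j$), observes that the $T$-action is induced by the same homomorphism $\phi_w\colon T \to \U(1)^k$, and concludes by restriction. The only cosmetic difference is that the paper handles a general closed subinterval $J$ by reducing to $J = I$ via triviality of the field over $[0,1)$, where you instead note that the proof of Proposition~\ref{prop:U1-equiv-quantum-disc-bundle-eval} localizes over $J$; both are fine.
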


\begin{proof}
Since the field $(A^{(q)}_w)_{q \in I}$ is trivial over $[0, 1)$, we may restrict to the case $J = I$.  The $T$-$C(I)$-algebra $A_w$ is given by~\eqref{eq:open-disc-prod-bundle} with respect to the integer sequence
\[
\left (\frac{\left ( \alpha_{i_1(w)}, \alpha_{i_1(w)} \right )}{2}, \ldots, \frac{\left (\alpha_{i_k(w)}, \alpha_{i_k(w)} \right )}{2} \right ).
\]
Moreover, the action of $\T$ is induced by the homomorphism
\[
T \rightarrow \U(1)^k, \quad t \mapsto (\alpha_{i_1(w)}(t), \alpha_{i_2(w)}(s_{i_1(w)}t), \ldots , \alpha_{i_m(w)}(s_{i_{m-1}(w)} \cdots s_{i_1(w)}t)).
\]
Hence we obtain the assertion by Proposition~\ref{prop:U1-equiv-quantum-disc-bundle-eval}.
\end{proof}

We have seen in Section~\ref{sec:cont-field-quant-homog-sps} that the C$^*$-algebras $C(G/K^{S, L}_q)$ form a continuous field in an essentially unique way.  This can be considered as a field of $T \times (T/T_L)$-algebras coming from the fiberwise left and right translations of $T$.

Regarding this field, we obtain the following $T\times (T/T_L)$-equivariant equivalence for the evaluation maps.  The equivalence without the action of $T \times (T/T_L)$ was already proved in~\cite{arXiv:1103.4346}*{Theorem~6.1}.

\begin{thm}
\label{thm:q-grp-bundle-eval-T-T-equivar-equiv}
 Let $J$ be a closed subinterval of $(0, 1]$ and $q_0 \in J$.  Then the evaluation map
 \[
 \ev_{q_0}\colon \Gamma_{J}(C(G_q/K^{S,L}_q)) \rightarrow C(G_{q_0}/K^{S,L}_{q_0})
 \]
 induces a $\KK^{T \times (T/T_L)}$-equivalence.
\end{thm}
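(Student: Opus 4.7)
The plan is to lift the finite composition sequence~\eqref{eq:q-flag-compos-seq} from the fiber $C(G_q/K^{S,L}_q)$ to a composition sequence of continuous subfields of $\Gamma_{J}(C(G_q/K^{S,L}_q))$ and then to evaluate at $q_0$ stratum by stratum, feeding Proposition~\ref{prop:q-Disc-prod-bundle-eval-equivar-KK-equiv} into a triangulated-category induction.

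First I would define $\Gamma_J(\ideal{J}^{(q)}_m)$ as the set of continuous sections over $J$ landing in $\ideal{J}^{(q)}_m$ fiberwise. Using Lemma~\ref{lem:quant-schubert-cell-transl} and Remark~\ref{rmk:T-action-adj-and-right-transl}, together with the classical counterpart~\eqref{eq:gamma-w-equivariance} at $q=1$, one sees that the subquotient identifications~\eqref{eq:q-flag-compos-seq-subquot} are $T \times (T/T_L)$-equivariant and vary continuously in $q$, so that
\[
\Gamma_J(\ideal{J}^{(q)}_{m-1})/\Gamma_J(\ideal{J}^{(q)}_m) \simeq \bigoplus_{\substack{w \in W^S \\ l(w) = m}} C(T/T_L) \otimes \Gamma_J(A^{(q)}_w)
\]
as $T\times(T/T_L)$-$C(J)$-algebras, where $T/T_L$ acts only on the first tensor factor by translation, and $T$ acts diagonally, with the action on $\Gamma_J(A^{(q)}_w)$ being precisely the one of Section~\ref{sec:equivar-fam-of-quant-homog-sp}. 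Granting this, the algebras $\Gamma_J(A^{(q)}_w)$ fall under the hypotheses of Proposition~\ref{prop:q-Disc-prod-bundle-eval-equivar-KK-equiv}, so the evaluation at $q_0$ is a $\KK^T$-equivalence on each summand. Tensoring with $C(T/T_L)$ (with its translation $T/T_L$-action and trivial $T$-action) preserves $\KK$-equivalences and upgrades the equivariance to $T\times(T/T_L)$, so the evaluation map is a $\KK^{T\times(T/T_L)}$-equivalence on each subquotient.

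Next I would run a downward induction on $m$ from $m_0$ to $-1$. The base case $\ideal{J}^{(q)}_{m_0}=0$ is trivial. For the inductive step I would use the triangulated structure of $\KK^{T\times(T/T_L)}$ from~\cite{MR2193334}: the short exact sequences
\[
0 \to \Gamma_J(\ideal{J}^{(q)}_m) \to \Gamma_J(\ideal{J}^{(q)}_{m-1}) \to \Gamma_J(\ideal{J}^{(q)}_{m-1}/\ideal{J}^{(q)}_m) \to 0
\]
and their evaluation at $q_0$ are semi-split (the quotient is nuclear, being built from disc and torus algebras, so an equivariant c.p.\ section exists), hence give rise to exact triangles; the evaluation maps assemble into a morphism of triangles. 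By the inductive hypothesis the outer two evaluations are $\KK^{T\times(T/T_L)}$-equivalences, so the five-lemma in the triangulated category forces the middle one to be an equivalence as well. Iterating up to $m = -1$ gives the conclusion.

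The main obstacle I expect lies in the first step: verifying rigorously that the composition series and the Schubert-cell isomorphism~\eqref{eq:q-flag-compos-seq-subquot} globalize to a continuous subfield decomposition with the precise $T\times(T/T_L)$-equivariance matching the algebras $A_w$ of Section~\ref{sec:equivar-fam-of-quant-homog-sp}. In particular, one must identify the quantum torus action given by Lemma~\ref{lem:quant-schubert-cell-transl} at $q<1$ with the classical action~\eqref{eq:gamma-w-equivariance} at $q=1$ through a single continuous homomorphism $T \to \U(1)^k$, so that the input hypothesis of Proposition~\ref{prop:q-Disc-prod-bundle-eval-equivar-KK-equiv} is uniformly available along $J$. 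Once this matching is set up, the triangulated-category induction and the reduction to Proposition~\ref{prop:q-Disc-prod-bundle-eval-equivar-KK-equiv} are essentially formal.
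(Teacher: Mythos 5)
Your proposal is correct and follows essentially the same route as the paper: lift the composition sequence~\eqref{eq:q-flag-compos-seq} to a $T\times(T/T_L)$-invariant continuous subfield filtration of $\Gamma_J(C(G_q/K^{S,L}_q))$, identify the subquotients with $C(T/T_L)\otimes\Gamma_J(A^{(q)}_w)$ via~\eqref{eq:q-flag-compos-seq-subquot} and Remark~\ref{rmk:T-action-adj-and-right-transl}, feed Proposition~\ref{prop:q-Disc-prod-bundle-eval-equivar-KK-equiv} into the quotient, and run the induction on $m$ using nuclearity to get completely positive splittings and the triangulated structure of $\KK^{T\times(T/T_L)}$ to conclude. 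The continuity issue you flag is likewise the point the paper disposes of by appeal to the results of Neshveyev--Tuset on the continuous field of homogeneous spaces.
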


\begin{proof}
 We first note that the ideals $\ideal{J}^{(q)}_m$ for $q \in J$ form a $T \times (T/T_L)$-invariant continuous subfield of $(C(G_q/K^{S, L}_q))_{q \in J}$.  We argue by induction that the evaluation map $\Gamma_{J}(\ideal{J}^{(q)}_m) \rightarrow \ideal{J}^{(q_0)}_m$ is an equivalence of the $\KK^{T \times (T/T_L)}$-category.
 
 The case for $m = m_0$ trivially holds as $\ideal{J}^{(q)}_{m_0}$ is the zero ideal.  Now, suppose that $\ev_{q_0}$ is a $\KK^T$-equivalence for the bundle $(\ideal{J}^{(q)}_m)_{q \in J}$.  From~\eqref{eq:q-flag-compos-seq-subquot}, one obtains the equivariant exact sequence
 \begin{equation}
 \label{eq:compos-seq-bundle}
0 \rightarrow \Gamma_{J}(\ideal{J}^{(q)}_m) \rightarrow \Gamma_{J}(\ideal{J}^{(q)}_{m-1}) \rightarrow \bigoplus_{\substack{w \in W^S,\\l(w) = m}} C(T/T_L) \otimes \Gamma_{J}(A^{(q)}_w) \rightarrow 0.
 \end{equation}
This extension is compatible with the evaluation homomorphism $\ev_{q_0}$ of each field.
 
 By Proposition~\ref{prop:q-Disc-prod-bundle-eval-equivar-KK-equiv} and Remark~\ref{rmk:T-action-adj-and-right-transl}, $\ev_{q_0}$ is a $\KK^{T \times (T/T_L)}$-equivalence for the quotient of~\eqref{eq:compos-seq-bundle}.  Since the quotient is nuclear, we have a completely positive splitting of this extension, and it can be identified with a (part of) mapping cone triangle.  The induction hypothesis for the middle column, together with the fact that $\KK^{T \times (T/T_L)}$ is a triangulated category imply that the evaluation map of $\Gamma_J(\ideal{J}^{(q)}_{m-1})$ is also a $\KK^{T \times (T/T_L)}$-equivalence.
\end{proof}

\begin{rmk}
\label{rmk:right-action-RKK-fib}
 S. Neshveyev pointed out the following partial strengthening of the above result.  Let $H$ be a locally compact group.  We say that a continuous field of $H$-algebras $A = (A_q)_{q \in J}$ on an interval $J$ is an $\RKK^H$-fibration~\cite{MR2511635} if $A$ is $\RKK^H$-equivalent to the constant field with fiber $A_q$ for any $q$.  Then the field $(C(G_q/K^{S,L}_q))_q$ is an $\RKK^{T/T_L}$-fibration with respect to the right translation action.  Indeed, Proposition~\ref{prop:q-Disc-prod-bundle-eval-equivar-KK-equiv} and~\cite{MR2511635}*{Corollary~1.6} implies that $\Gamma_{J}(A^{(q)}_w)$ is an $\RKK$-fibration.  Hence the quotient field in~\eqref{eq:compos-seq-bundle} is an $\RKK^{T/T_L}$-fibration.  Since the equivariant $\RKK$-category is triangulated and splitting exists as a $T/T_L$-$C(I)$-linear completely positive map~\cite{MR1610242}*{Th\'{e}or\`{e}me~7.2}, we may use the induction on the ideals $\Gamma_{J}(\ideal{J}^{(q)}_m)$ as above.
\end{rmk}

\section{Applications}
\label{sec:applications}

\subsection{Ring structure of \texorpdfstring{$K^*(C(G_q))$}{K(C(Gq))}}
\label{sec:ring-stru-K-C-Gq}

The coproduct homomorphism $\Delta_q \colon C(G_q) \rightarrow C(G_q) \otimes C(G_q)$ induces a map from $K^*(C(G_q) \otimes C(G_q))$ to $K^*(C(G_q)$.  Combining this with the external Kasparov product
\[
\KK(C(G_q), \C) \times \KK(C(G_q), \C) \rightarrow \KK(C(G_q) \otimes C(G_q), \C),
\]
we obtain an associative product
\begin{equation}
\label{eq:coprod-prod-on-homol}
x \cdot y = \Delta_q \otimes_{C(G_q) \otimes C(G_q)} (y \otimes x)
\end{equation}
on the $K$-homology group $K^*(C(G_q))$.  

The continuous field $(C(G_q))_{q \in (0, 1]}$ can be thought as a field of Hopf C$^*$-algebras in the following way.  Let $(\phi^q)_{q \in (0, 1]}$ be a continuous family of isomorphisms from $\UnivEnv(G_q)$ to $\UnivEnv(G)$, and $J$ be a closed subinterval of $(0, 1]$.  Now, we can define a $*$-algebra homomorphism from $C(J) \otimes \C[G]$ into $\Gamma_{J}(C(G_q) \otimes C(G_q))$ by $f \otimes a \mapsto (f(q) \Delta_q(\phi^{q\#}(a)))_{q \in J}$.  The norm on $C(J) \otimes \C[G]$ induced by this homomorphism also satisfies the condition of the $C(J)$-algebra norm on $C(J) \otimes \C[G]$ in Section~\ref{sec:cont-field-quant-homog-sps}.  Hence it induces a $C(J)$-algebra homomorphism
\[
\Delta_*\colon \Gamma_J( C(G_q)) \rightarrow \Gamma_J(C(G_q) \otimes C(G_q)).
\]

\begin{thm}
\label{thm:ring-structure-G-q}
The ring $K^*(C(G_q))$ is mutually isomorphic for $q \in (0, 1]$.
\end{thm}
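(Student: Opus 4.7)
The plan is to leverage the continuous field structure of $(C(G_q))_{q \in J}$ together with the $\KK$-equivalences furnished by Theorem~\ref{thm:q-grp-bundle-eval-T-T-equivar-equiv} to transport the coproduct-induced product on $K^*(C(G_q))$ between values of $q$. Throughout, fix a closed subinterval $J \subseteq (0, 1]$.

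First, specialize Theorem~\ref{thm:q-grp-bundle-eval-T-T-equivar-equiv} to $S = \emptyset$ and $L = P^c(\emptyset) = P$; then $\tilde K^S = \ensemble{e}$ and, since the full weight lattice $P$ separates points of $T$ for simply connected $G$, also $T_L = \ensemble{e}$, whence $K^{S, L}_q = \ensemble{e}$ and $C(G_q/K^{S, L}_q) = C(G_q)$. Forgetting $T \times T$-equivariance, the theorem yields that $\ev_{q_0}\colon \Gamma_J(C(G_q)) \to C(G_{q_0})$ is a $\KK$-equivalence for every $q_0 \in J$; let $\beta_{q_0}$ denote its $\KK$-inverse. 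The analogous statement is required for the tensor field: by nuclearity of $C(G_q)$ one has $\Gamma_J(C(G_q) \otimes C(G_q)) \simeq \Gamma_J(C(G_q)) \otimes_{C(J)} \Gamma_J(C(G_q))$, and the natural surjection $\mu\colon \Gamma_J(C(G_q)) \otimes \Gamma_J(C(G_q)) \to \Gamma_J(C(G_q) \otimes C(G_q))$ satisfies $\ev_{q_0} \otimes \ev_{q_0} = \ev^{(2)}_{q_0} \circ \mu$, where $\ev^{(2)}_{q_0}$ denotes evaluation of the tensor field. Invoking the $\RKK$-fibration perspective of Remark~\ref{rmk:right-action-RKK-fib}, the tensor square inherits the fibration property, so $\ev^{(2)}_{q_0}$ is likewise a $\KK$-equivalence.

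For $q_0, q_1 \in J$, put $\eta = [\ev_{q_1}] \circ \beta_{q_0} \in \KK(C(G_{q_0}), C(G_{q_1}))$, a $\KK$-equivalence. The induced group isomorphism $\eta^*\colon K^*(C(G_{q_1})) \to K^*(C(G_{q_0}))$ becomes a ring isomorphism once the coproduct compatibility
\[
[\Delta_{q_1}] \circ \eta = (\eta \otimes \eta) \circ [\Delta_{q_0}] \quad \text{in } \KK(C(G_{q_0}), C(G_{q_1}) \otimes C(G_{q_1}))
\]
has been verified, since expanding the external product defining~\eqref{eq:coprod-prod-on-homol} then yields $\eta^*(x \cdot y) = \eta^*(x) \cdot \eta^*(y)$. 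To prove the identity, postcompose both sides with the $\KK$-invertible $[\ev^{(2)}_{q_0}]$; using the commuting squares $\Delta_{q_i} \circ \ev_{q_i} = \ev^{(2)}_{q_i} \circ \Delta_*$ for $i = 0, 1$, the factorization of $\ev_{q_1} \otimes \ev_{q_1}$ through $\mu$, and the cancellation $[\ev_{q_0}] \circ \beta_{q_0} = \mathrm{id}$, both sides reduce to $[\Delta_{q_0}]$.

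The hard part will be the portion of the first step devoted to the tensor field: securing a $\KK$-equivalence for $\Gamma_J(C(G_q) \otimes C(G_q))$ rather than for the ordinary tensor product $\Gamma_J(C(G_q)) \otimes \Gamma_J(C(G_q))$ (whose $\KK$-equivalence is routine from Step~1 and nuclearity). The distinction is crucial because $\Delta_*$ targets the former, and the cancellation in the diagram chase requires $\ev^{(2)}_{q_0}$ to be $\KK$-invertible. Without either the $\RKK$-fibration framework of Remark~\ref{rmk:right-action-RKK-fib} or a direct composition-series argument for $G_q \times G_q$ parallel to that of Theorem~\ref{thm:q-grp-bundle-eval-T-T-equivar-equiv}, this cancellation cannot be carried out.
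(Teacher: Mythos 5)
Your argument is correct and rests on the same two pillars as the paper's proof: the fibration property of the field $(C(G_q))_{q\in J}$ coming from Theorem~\ref{thm:q-grp-bundle-eval-T-T-equivar-equiv} and Remark~\ref{rmk:right-action-RKK-fib} (specialized to $K^{S,L}=\ensemble{e}$ exactly as you do), and the field-level coproduct $\Delta_*$. The difference is in the packaging, and it is not merely cosmetic. The paper defines the product directly on $\RKK(J;\Gamma_J(C(G_q)),C(J))$ by $x\cdot y=\Delta_*\otimes_{\Gamma_J(C(G_q)\otimes C(G_q))}(y\otimes_{C(J)}x)$ and observes that the evaluation functor $\Ev_t$ is then a ring homomorphism; multiplicativity of $\Ev_t$ is formal (compatibility of evaluation with external products over $C(J)$, together with $\Ev_t([\Delta_*])=[\Delta_t]$), so the only nontrivial input is bijectivity of $\Ev_t$ on the $K$-homology of the single field. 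Your version, which transports the product through $\eta=[\ev_{q_1}]\circ\beta_{q_0}$ and verifies $[\Delta_{q_1}]\circ\eta=(\eta\otimes\eta)\circ[\Delta_{q_0}]$, additionally needs $\ev^{(2)}_{q_0}$ on $\Gamma_J(C(G_q)\otimes C(G_q))=\Gamma_J(C(G_q))\otimes_{C(J)}\Gamma_J(C(G_q))$ to be $\KK$-invertible; you rightly single this out as the crux, and your justification is sound, since the $\otimes_{C(J)}$-product of $\RKK(J)$-equivalences is again an $\RKK(J)$-equivalence, so the tensor-square field is again an $\RKK$-fibration. Two small corrections to the diagram chase: the identity should be tested by precomposing with the invertible $[\ev_{q_0}]$, not by composing with $[\ev^{(2)}_{q_0}]$ (whose source does not match either side), and both sides then reduce to $[\ev^{(2)}_{q_1}]\circ[\Delta_*]$ rather than to $[\Delta_{q_0}]$, after cancelling $[\mu]$, which is a $\KK$-equivalence by two-out-of-three from $[\ev^{(2)}_{q_0}]\circ[\mu]=[\ev_{q_0}]\otimes[\ev_{q_0}]$; the ingredients you list suffice for this. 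In short, the paper's formulation buys economy --- no statement about the tensor-square field is ever needed --- while yours makes the fiber-to-fiber ring isomorphism $K^*(C(G_{q_1}))\simeq K^*(C(G_{q_0}))$ completely explicit.
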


\begin{proof}
Let $J$ be an arbitrary closed subinterval of $(0, 1]$.  For each $t \in J$, we let $\Ev_t$ denote the functor from $\RKK(J; -, -)$ to $\KK$ given by the evaluation at $t$.  By Remark~\ref{rmk:right-action-RKK-fib},
\begin{equation}
\label{eq:eval-RKK-homol-to-KK-homol}
\Ev_t\colon \RKK(J; \Gamma_J(C(G_q)), C(J)) \rightarrow \KK(C(G_t), \C)
\end{equation}
is an isomorphism of abelian groups for any $t \in J$.

For $x$ and $y$ in $\RKK(J; \Gamma_J(C(G_q)), C(J))$, put
\[
x \cdot y = \Delta_* \otimes_{\Gamma_J(C(G_q) \otimes C(G_q))} (y \otimes_{C(J)} x).
\]
This defines a product structure on $\RKK(J; \Gamma_J(C(G_q)), C(J))$ such that~\eqref{eq:eval-RKK-homol-to-KK-homol} becomes a ring homomorphism.
\end{proof}

\begin{rmk}
The ring $K^*(C(G))$ is the $K$-homology group of the compact topological space $G$ endowed with the ring structure induced by the group law map $G \times G \rightarrow G$.
\end{rmk}

\subsection{Induction-restriction and the \texorpdfstring{$K$}{K}-homology ring struture}

The above ring structure can be described in terms of the induction procedure and operations on the $T$-equivariant $\KK$-groups.  The starting point is the Green--Julg isomorphism~\cite{vergnioux-thesis}*{Proposition~5.11}
\begin{equation}
\label{eq:green-julg-CG_q-homol-T-G_q-T}
K^*(C(G_q)) \simeq \KK^T_*(\Res^{G_q}_T C(G_q), C(T)).
\end{equation}

Let $\eta\colon C(G_q) \rightarrow \Ind^{G_q}_T C(G_q)$ be the structure morphism of adjunction~\eqref{eq:qgrp-KK-res-ind-adjunction} for the case of $A = C(G_q)$.  This homomorphism can be identified with the embedding of $C(G_q)$ into the second component of $C(G_q/T) \boxtimes_{G_q} C(G_q)$~\cite{MR2566309}*{the proof of Proposition~4.7}.  Unwrapping the identification~\eqref{eq:green-julg-CG_q-homol-T-G_q-T}, we see that the product structure on $K^*(C(G_q))$ corresponds to the operation
\[
x \cdot y =  [\eta] \otimes_{\Ind^{G_q}_T C(G_q)} \Res^{G_q}_T \Ind^{G_q}_T(x) \otimes_{C(G_q)} y
\]
on $\KK^T(C(G_q), C(T))$.
 
Carrying out the above consideration within the framework of continuous field, we can give an alternative, although seemingly unnecessarily complicated, proof of Theorem~\ref{thm:ring-structure-G-q}.  We sketch the argument in the rest of the section.

Let us fix a closed subinterval $J$ of $(0, 1]$.  Then, we can consider the notion of a $\Gamma_J(C(G_q))$-algebra, which is given by a $C(J)$-algebra $A$ endowed with the `coaction' map
\[
\alpha\colon A \rightarrow \Gamma_J(C(G_q)) \otimes_{C(J)} A
\]
satisfying $\alpha_{1 3} \alpha_{2 3} = (\Delta_*)_{1 2} \alpha_{2 3}$.  For example, the algebras
\[
\Gamma_J(C(G_q)), \quad \Gamma_J(C(G_q) \boxtimes_{G_q} C(G_q)), \quad \Gamma_J(C(G_q/T) \boxtimes_{G_q} C(G_q))
\]
have natural structures of $\Gamma_J(C(G_q))$-algebra.  The $G_q$-algebra homomorphisms $(\eta \colon C(G_q) \rightarrow C(G_q/T) \boxtimes C(G_q))_{q \in J}$ patch together and define a $\Gamma_J(C(G_q))$-algebra homomorphism
\[
\eta_*\colon \Gamma_J(C(G_q)) \rightarrow \Gamma_J(C(G_q/T) \boxtimes C(G_q)).
\]

The continuous field $(C(G_q))_{q \in J}$ admits the `fiberwise right translation' coaction
\[
R_{T\mhyphen C(J)}\colon \Gamma_J(C(G_q)) \rightarrow \Gamma_J(C(G_q) \otimes C(T))
\]
by $T$.  When $A$ is a $T$-$C(J)$-algebra given by the coaction $\alpha\colon A \rightarrow C(T) \otimes A$, we can define its induction to a $\Gamma_J(C(G_q))$-algebra defined by
\[
\Ind^{\Gamma_J(C(G_q))}_{T\mhyphen C(J)} A = \ensemble{x \in \Gamma_J(C(G_q)) \otimes_{C(J)} A \suchthat (R_{T\mhyphen C(J)})_{1 2}(x) = \alpha_{2 3}(x)}.
\]
Furthermore, the endofunctor $A \mapsto \Res_T \Ind^{\Gamma_J(C(G_q))}_{T\mhyphen C(J)} A$ on the category of $T$-$C(J)$-algebras can be extended to an endofunctor on $\RKK^T(J; -, -)$.\footnote{It should be able to define the `fiberwise $G_q$-equivariant $\KK$-category' $\KK^{\Gamma_J(C(G_q))}$ which serves as the receptacle of the factorization $\Ind^{\Gamma_J(C(G_q))}_{T\mhyphen C(J)}\colon \RKK^T(J; -, -) \rightarrow \KK^{\Gamma_J(C(G_q))}$ and $\Res^{\Gamma_J(C(G_q))}_{T\mhyphen C(J)}\colon \KK^{\Gamma_J(C(G_q))} \rightarrow \RKK^T(J; -, -)$, but we do not dare to do that in this paper.}

An argument similar to Remark~\ref{rmk:right-action-RKK-fib} shows that $\Gamma_J(C(G_q))$ is an $\RKK^T$-fibration with respect to the left translation by $T$.  Thus,
\[
\Ev_t\colon \RKK^T(J; \Gamma_J(C(G_q)), C(T) \otimes C(J)) \rightarrow \KK^T(C(G_p), C(T))
\]
is an isomorphism of abelian groups for any $t \in J$.

When $x$ and $y$ are elements in $\RKK^T(J; \Gamma_J(C(G_q)), C(T) \otimes C(J))$, put
\[
x \cdot y = \eta_* \otimes_{\Gamma_J(C(G_q/T) \boxtimes C(G_q))} \Res_T \Ind^{\Gamma_J(C(G_q))}_{T\mhyphen C(J)} (x) \otimes_{\Gamma_J(C(G_q))} y.
\]
It defines a ring structure on $\RKK^T(J; \Gamma_J(C(G_q)), C(T) \otimes C(J))$ such that $\Ev_t$ becomes a ring homomorphism for each $t$. This proves Theorem~\ref{thm:ring-structure-G-q}.

\subsection{The Borsuk--Ulam theorem for the quantum spheres}
\label{sec-borsuk-ulam-q-sphere}

Let $C_2$ denote the cyclic group of order $2$.  We consider the $n$-sphere $S^n$ as a $C_2$-space with respect to the antipodal map $x \mapsto -x$, where we consider $S^n$ as the unit sphere of $\R^{n+1}$.  The Borsuk--Ulam theorem states that there is no $C_2$-equivariant continuous map from $S^{n+1}$ to $S^n$.  Since Borsuk's initial proof, there have been numerous alternative proofs and generalizations of the problem.  See~\citelist{\cite{MR801938}\cite{MR1988723}} for an overview.

In this section we show that the odd-dimensional quantum spheres $S^{2n+1}_q$ of Example~\ref{example:quantum-sphere-as-homogen-su-n-su-n-1} and the even-dimensional ones $S^{2n}_q$ introduced by Hong--Szyma\'{n}ski~\cite{MR1942860} satisfy an analogous statement, as conjectured by Baum--Hajac~\cite{baum-hajac-galois-klein}.

Let us briefly review the construction of quantum spheres.  The algebra $C(S^{2n-1}_q)$ is generated by the elements $z_1, \ldots, z_n$ satisfying
\begin{gather*}
z_j z_i = q z_i z_j \quad (i < j), \quad z_j^* z_i = q z_i z_j^* \quad (i \neq j),\\
z_i^* z_i^{} = z_i z_i^* + (1-q^2)\sum_{j>i} z_j z_j^*, \quad \sum_{i=1}^n z_i z_i^* = 1.
\end{gather*}
The algebra $C(S^{2n}_q)$ of the $2n$-dimensional quantum sphere is defined as the quotient of $C(S^{2n+1}_q)$ by the ideal generated by $z_{n+1} - z_{n+1}^*$~\cite{MR1942860}*{Section~5}.

There is also a surjective homomorphism from $C(S^{2n}_q)$ to $C(S^{2n-1}_q)$.  These surjections $C(S^{n+1}_q) \rightarrow C(S^n_q)$, denoted by $\iota^\#$, correspond to the embedding of $S^n$ as an equator in $S^{n+1}$.

The map $z_i \rightarrow -z_i$ for $i = 1, \ldots, n+1$ defines an action of $C_2$ on $C(S^{2n+1}_q)$, which descends to $C(S^{2n}_q)$.  The homomorphisms $\iota^\#$ are $C_2$-equivariant.

The $K$-groups of the quantum spheres are computed as~\citelist{\cite{MR1086447}\cite{MR1942860}}
\[
K_*(C(S^{2n-1}_q)) \simeq \Z \oplus \Z, \quad K_*(C(S^{2n}_q)) \simeq \Z^2 \oplus 0.
\]
Thus, these invariants are isomorphic to the classical case.

The action of $T \times (T/T_L) \simeq U(1)^n$ on $C(S^{2n-1}_q)$ is given by
\[
(\lambda_1, \ldots, \lambda_n).(z_1, \ldots, z_n) = (\lambda_n \lambda_1 z_1, \lambda_n \overline{\lambda_1} \lambda_2 z_2, \lambda_n \overline{\lambda_2} \lambda_3 z_3, \ldots \lambda_n \overline{\lambda_{n-1}} z_n).
\]
Hence it is the separate gauge action up to a finite cover.

\begin{prop}
\label{prop:equivar-q-sphere-equiv-classic-sphere}
 Let $n$ be any integer greater than $1$.  The algebra $C(S^{2n-1}_q)$ of odd quantum sphere is $\KK^{U(1)^n}$-equivalent to $C(S^{2n-1})$.
\end{prop}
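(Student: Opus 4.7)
The strategy is to apply the main theorem of the paper, Theorem~\ref{thm:q-grp-bundle-eval-T-T-equivar-equiv}, to the specific data from Example~\ref{example:quantum-sphere-as-homogen-su-n-su-n-1}. There the group is $G = \SU(n)$ and the Poisson--Lie subgroup is $K^{S,L} = \SU(n-1)$, realizing $S^{2n-1}_q$ as $G_q/K^{S,L}_q$ and the classical sphere $S^{2n-1}$ as $G/K^{S,L}$. The maximal torus $T$ of $\SU(n)$ is isomorphic to $\U(1)^{n-1}$, and $T/T_L \simeq \U(1)$ since $L$ is the rank-one subgroup generated by the last fundamental weight, so $T \times (T/T_L) \simeq \U(1)^n$, matching the statement.

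Concretely, I would fix a closed subinterval $J \subset (0,1]$ containing both $1$ and the chosen $q$ and form the continuous field $\Gamma_J(C(G_{q'}/K^{S,L}_{q'}))$ of Section~\ref{sec:cont-field-quant-homog-sps}. Theorem~\ref{thm:q-grp-bundle-eval-T-T-equivar-equiv} then asserts that the two evaluation maps
\begin{align*}
\ev_q\colon \Gamma_J(C(G_{q'}/K^{S,L}_{q'})) &\to C(S^{2n-1}_q),\\
\ev_1\colon \Gamma_J(C(G_{q'}/K^{S,L}_{q'})) &\to C(S^{2n-1})
\end{align*}
are both $\KK^{T \times (T/T_L)}$-equivalences. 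Composing $[\ev_q]$ with an inverse of $[\ev_1]$ in $\KK^{\U(1)^n}$ yields the desired $\KK^{\U(1)^n}$-equivalence between $C(S^{2n-1}_q)$ and $C(S^{2n-1})$.

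The only bookkeeping required is to match the $T \times (T/T_L)$-action on $C(G_q/K^{S,L}_q)$ with the explicit $\U(1)^n$-action on $C(S^{2n-1}_q)$ displayed just above the proposition. This is carried out in Example~\ref{example:quantum-sphere-as-homogen-su-n-su-n-1}: the right translation by $T/T_L = \U(1)$ corresponds to the common gauge factor $\lambda_n$, while the left translation by $T \simeq \U(1)^{n-1}$ accounts for the separate rotations of the generators $z_1, \dots, z_{n-1}$, the discrepancy being precisely the finite cover remarked on in the text and absorbed into the chosen isomorphism $T \times (T/T_L) \simeq \U(1)^n$. Because the continuous field carries the action fiberwise and both evaluations are equivariant by construction, no further verification is needed, so the proposition reduces to a direct specialization of Theorem~\ref{thm:q-grp-bundle-eval-T-T-equivar-equiv}; there is no genuine obstacle here, as all the real work is already contained in Section~\ref{sec:equivar-compar-quant-g-sp}.
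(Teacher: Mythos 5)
Your proposal is correct and follows exactly the paper's route: the proposition is obtained by specializing Theorem~\ref{thm:q-grp-bundle-eval-T-T-equivar-equiv} to the data of Example~\ref{example:quantum-sphere-as-homogen-su-n-su-n-1} and composing the two evaluation equivalences over an interval containing $q$ and $1$. The identification $T \times (T/T_L) \simeq \U(1)^n$ and the matching with the gauge action are handled just as in the text preceding the proposition.
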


\begin{proof}
 This follows from Theorem~\ref{thm:q-grp-bundle-eval-T-T-equivar-equiv}, applied to the Poisson--Lie subgroup $\SU(n-1)$ of $\SU(n)$ (see Example~\ref{example:quantum-sphere-as-homogen-su-n-su-n-1}).
\end{proof}

Let $A$ be a C$^*$-algebra whose $K$-groups have finite rank over $\Z$.  The \textit{Lefschetz number} $\Lef(x)$~\cite{MR2797970} of an element $x$ in $\KK_0(A, A)$ is the alternating trace of the linear map on the graded vector space $K_*(A) \otimes \Q$ induced by $x$.

\begin{prop}
  \label{prop:free-action-lefschetz-num}
Let $H$ be a finite group, and $M$ be a closed manifold endowed with a free action of $H$.  Then, for any element $x$ of $\KK^H(C(M), C(M))$, the Lefschetz number of $x$ is an integer divisible by the order of $H$.
\end{prop}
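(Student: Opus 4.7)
The plan is to lift $\Lef(x) \in \Z$ to an equivariant Lefschetz number $\Lef^H(x) \in \KK^H(\C, \C) = R(H)$ in the sense of~\cite{MR2797970}, and exploit the freeness of the action to pin down $\Lef^H(x)$ as an integer multiple of the class $[\mathrm{reg}_H]$ of the regular representation. Since the ordinary Lefschetz number is recovered from the equivariant one by the augmentation $\epsilon\colon R(H) \to \Z$, $[\rho] \mapsto \dim \rho$, and since $\epsilon([\mathrm{reg}_H]) = |H|$, this will immediately give $\Lef(x) \in |H|\cdot\Z$.

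The key step is an equivariant localization argument. Choose a smooth $H$-equivariant structure on $M$ and use equivariant Kasparov duality to express $\Lef^H(x)$ as an index pairing with an equivariant diagonal class on $M \times M$. An Atiyah--Segal style localization theorem for equivariant $K$-theory then identifies the image of $\Lef^H(x)$ in each localization $R(H)_{\mathfrak{p}_h}$ at the prime corresponding to $h \in H$ with an index supported on the fixed locus $M^h$. Since the action is free, $M^h$ is empty for every $h \ne e$, so all these localized images vanish; equivalently, the character $\chi_{\Lef^H(x)}(h)$ is zero for every $h \ne e$.

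Finally, an element $V = \sum_\rho n_\rho [\rho] \in R(H)$ whose character is supported at $\{e\}$ must be of the form $n \cdot [\mathrm{reg}_H]$ for some $n \in \Z$: by character orthogonality one has $n_\rho = (\chi_V(e)/|H|)\dim \rho$, and applying this to the trivial representation (of dimension $1$) shows $\chi_V(e)/|H|$ is integral, so $n \in \Z$ and $n_\rho = n\dim\rho$. Hence $\Lef^H(x) = n\cdot[\mathrm{reg}_H]$, and $\Lef(x) = \epsilon(\Lef^H(x)) = n\cdot|H|$ is divisible by $|H|$. The main obstacle is verifying that the equivariant localization / fixed-point formula applies to arbitrary equivariant $\KK$-self-morphisms rather than only to continuous self-maps of $M$; this is precisely the content of the equivariant Lefschetz formalism of~\cite{MR2797970}, which I would invoke directly.
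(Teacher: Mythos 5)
Your argument is correct, and its first half coincides with the paper's: both proofs use the $H$-equivariant Kasparov duality classes $\Delta,\hat{\Delta}$ for $C(M)$ \cite{MR918241} together with Emerson's formula \cite{MR2797970} to rewrite $\Lef(x)$ as the augmentation of an equivariant index class in $\KK^H(\C,\C)\simeq R(H)$ --- concretely, the index of an $H$-equivariant elliptic operator $D_\xi$ on $M$ obtained by twisting the de~Rham--Dirac operator by a Clifford module built from $x$. (A caveat that applies to both proofs: Emerson's Theorem~6 is stated non-equivariantly, so one must note that the duality classes can be chosen $H$-equivariantly and that the identity then holds in $R(H)$; this is routine, and it is also exactly what makes your localization step legitimate, since it exhibits $\Lef^H(x)$ as a pairing factoring through $K$-theory classes supported on $M$.) Where you diverge is in how freeness is exploited. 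The paper descends $D_\xi$ to an elliptic operator $\widetilde{D_\xi}$ on the quotient manifold $M/H$ and quotes the covering index relation $\Ind(D_\xi)=\absolute{H}\Ind(\widetilde{D_\xi})$ as in \cite{MR0420729}; you instead keep the class in $R(H)$, kill its character at every $h\neq e$ by Atiyah--Segal localization to the empty fixed sets $M^h$, and conclude by character orthogonality that it equals $n[\mathrm{reg}_H]$. For a finite free action these two final steps are essentially interchangeable --- the covering index formula for finite groups is itself usually proved by your character-vanishing argument --- so the proofs are two faces of the same computation. Your version yields the slightly stronger conclusion that the equivariant Lefschetz class itself is an integer multiple of the regular representation, at the cost of invoking the localization theorem; the paper's version stays closer to elementary index theory on the quotient manifold.
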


\begin{proof}
  We may assume that $M$ is a Riemannian manifold endowed with an $H$-invariant metric.  Let $\Cliff(M)$ be the associated Clifford algebra bundle over $M$, and let $\Gamma\Cliff(M)$ denote the $\Z/2\Z$-graded C$^*$-algebra of its continuous sections.  Then by functoriality $\Gamma\Cliff(M)$ becomes an $H$-algebra.

There are elements
\[
\hat{\Delta} \in \KK^H(\C, \Gamma\Cliff(M) \grtensor C(M)), \quad \Delta \in \KK^H(C(M) \grtensor \Gamma\Cliff(M), \C)
\]
which satisfy the Poincar\'{e} duality equations~\cite{MR918241}
\begin{align*}
(\hat{\Delta} \otimes \Id_{\Gamma\Cliff(M)}) \otimes_{\Gamma\Cliff(M) \grtensor C(M) \grtensor \Gamma\Cliff(M)} (\Id_{\Gamma\Cliff(M)} \otimes \Delta) &= \Id_{\Gamma\Cliff(M)} \\
\intertext{and}
(\Id_{C(M)} \otimes \hat{\Delta}) \otimes_{C(M) \grtensor \Gamma\Cliff(M) \grtensor \Gamma\Cliff(M)} (\Delta \otimes \Id_{C(M)}) &= \Id_{C(M)}.
\end{align*}
We recall that $\Delta$ is defined as the composition of the product map
\[
m \colon C(M) \grtensor \Gamma\Cliff(M) \rightarrow \Gamma\Cliff(M)
\]
and the element $[D] \in \KK^H(\Gamma\Cliff(M), \C)$ represented by an $H$-equivariant elliptic operator $D$ on the graded Clifford module $\wedge^\even(M) \oplus \wedge^\odd(M)$.

Let $\Sigma$ be the flip map from $C(M) \grtensor \Gamma\Cliff(M)$ to $\Gamma\Cliff(M) \grtensor C(M)$.  By~\cite{MR2797970}*{Theorem~6}, we have the equality
\begin{equation}
  \label{eq:lef-num-compute-poincare}
  \Lef(x) \Id_\C = \hat{\Delta} \otimes_{\Gamma\Cliff(M) \grtensor C(M)} (\Id_{\Gamma\Cliff(M)} \otimes x) \otimes_{\Gamma\Cliff(M) \grtensor C(M)} \Sigma \Delta
\end{equation}
for any $x \in \KK(C(M), C(M))$.

Now, let $\xi$ be a Clifford module representing the element
\[
\hat{\Delta} \otimes_{\Gamma\Cliff(M) \grtensor C(M)} (\Id_{\Gamma\Cliff(M)} \otimes x) \otimes_{\Gamma\Cliff(M) \grtensor C(M)} \Sigma m  \in \KK^H(\C, \Gamma\Cliff(M)).
\]
Then the right hand side of~\eqref{eq:lef-num-compute-poincare} is equal to the index of the twist $D_\xi$ of $D$ by $\xi$, which is again an $H$-equivariant elliptic operator on $M$.  Thus, we obtain an elliptic operator  $\widetilde{D_\xi}$ on the quotient manifold $M/H$ induced by $D_\xi$.  We have the equality $\Ind(D_\xi) = \absolute{H} \Ind(\widetilde{D_\xi})$ as in~\cite{MR0420729}, which proves the assertion.
\end{proof}

\begin{thm}
\label{thm:quant-borsuk-ulam}
Let $n$ be any positive integer.  For any value of $0 < q \le 1$, there is no $\KK^{C_2}$-morphism from $C(S^n_q)$ to $C(S^{n+1}_q)$ which induces a unital homomorphism on the $K_0$-groups.
\end{thm}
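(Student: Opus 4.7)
The strategy, as signalled by Casson--Gottlieb, is to argue by contradiction: from a hypothetical $\KK^{C_2}$-morphism $\phi\colon C(S^n_q)\to C(S^{n+1}_q)$ that is unital on $K_0$, manufacture a $\KK^{C_2}$-self-morphism of an \emph{odd-dimensional} quantum sphere whose Lefschetz number equals $1$, then transfer it to the classical odd sphere via Proposition~\ref{prop:equivar-q-sphere-equiv-classic-sphere} and contradict Proposition~\ref{prop:free-action-lefschetz-num}, which forces the Lefschetz number of any $\KK^{C_2}$-self-morphism on a free $C_2$-manifold to be divisible by~$2$.

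Concretely, fixing the equivariant surjection $\iota^\#\colon C(S^{n+1}_q)\to C(S^n_q)$ corresponding to the equatorial embedding, I would split on the parity of $n$ and form $\psi=\iota^\#\circ\phi\in\KK^{C_2}(C(S^n_q),C(S^n_q))$ when $n$ is odd, or $\psi=\phi\circ\iota^\#\in\KK^{C_2}(C(S^{n+1}_q),C(S^{n+1}_q))$ when $n$ is even. In both cases $\psi$ is an equivariant self-morphism of $C(S^m_q)$ with $m\in\{n,n+1\}$ \emph{odd}, so that $K_0(C(S^m_q))\simeq\Z$ (generated by $[1]$) and $K_1(C(S^m_q))\simeq\Z$, while the ``bridge'' sphere $C(S^{m\pm1}_q)$ has $K_1=0$. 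Unitality of $\iota^\#$ together with the unitality hypothesis on $\phi$ shows that $\psi_{*}$ is the identity on $K_0$, whereas $\psi_{*}$ on $K_1$ factors through $K_1(C(S^{m\pm1}_q))=0$. Hence $\Lef(\psi)=1$.

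Next I would identify the antipodal $C_2$-action on $S^m_q$ with the restriction, along the subgroup generated by $(1,\dots,1,-1)\in\U(1)^{(m+1)/2}$, of the gauge action appearing just before Proposition~\ref{prop:equivar-q-sphere-equiv-classic-sphere}; restricting that proposition produces a $\KK^{C_2}$-equivalence between $C(S^m_q)$ and $C(S^m)$ equipped with the classical antipodal action. Transporting $\psi$ across this equivalence yields $\widetilde\psi\in\KK^{C_2}(C(S^m),C(S^m))$ with $\Lef(\widetilde\psi)=\Lef(\psi)=1$. Because $m$ is odd the antipodal $C_2$-action on $S^m$ is free, so Proposition~\ref{prop:free-action-lefschetz-num} forces $2\mid\Lef(\widetilde\psi)$, contradicting $\Lef(\widetilde\psi)=1$. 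This handles $m\geq 3$; the residual case $m=1$ (only when $n=1$) is handled directly since $C(S^1_q)=C(S^1)$ as $C_2$-algebras, reducing to the classical $1$-dimensional Borsuk--Ulam statement.

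The main obstacle is the equivariance bookkeeping in the transfer step: one must verify that the $C_2$-action entering the Lefschetz computation on the quantum sphere coincides, under the $\KK^{\U(1)^{(m+1)/2}}$-equivalence of Proposition~\ref{prop:equivar-q-sphere-equiv-classic-sphere}, with the \emph{free} antipodal action on $S^m$. Once this identification is pinned down, the remaining input---the $K$-theory computation yielding $\Lef(\psi)=1$ and the divisibility output of Proposition~\ref{prop:free-action-lefschetz-num}---is purely formal, and the conclusion is uniform in $0<q\leq 1$ because Proposition~\ref{prop:equivar-q-sphere-equiv-classic-sphere} holds at every such $q$.
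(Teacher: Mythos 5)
Your proposal is correct and follows essentially the same route as the paper: compose with $\iota^\#$ according to the parity of $n$ to produce a $\KK^{C_2}$-self-morphism of an odd quantum sphere whose Lefschetz number is $1$ (identity on $K_0=\Z[1]$, zero on $K_1$ since it factors through the even sphere), then contradict the evenness forced by Proposition~\ref{prop:free-action-lefschetz-num} after transferring via the equivalence of Proposition~\ref{prop:equivar-q-sphere-equiv-classic-sphere} and the identification of the antipodal action with $-1$ in the right-translation circle. Your explicit treatment of the residual case $m=1$, where that proposition does not apply but $C(S^1_q)=C(S^1)$ directly, is a detail the paper leaves implicit.
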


\begin{proof}
 The antipodal map on $C(S^{2k-1}_q)$ can be realized as the action of $-1 \in \U(1)$ with respect to the right translation action on $C(\SU_q(k)/\SU_q(k-1))$.  By Proposition~\ref{prop:equivar-q-sphere-equiv-classic-sphere}, we have a $\KK^{C_2}$-equivalence between $C(S^{2k-1}_q)$ and $C(S^{2k-1})$.  By Proposition~\ref{prop:free-action-lefschetz-num}, the Lefschetz number of any $x \in \KK^{C_2}(C(S^{2k-1}_q), C(S^{2k-1}_q))$ must be an even integer.
 
 Now, suppose that $n$ is odd, and that $\phi$ is a $C_2$-equivariant $\KK^{C_2}$-morphism from $C(S^n_q)$ to $C(S^{n+1}_q)$ which is unital on $K_0$.  Then the composition $\phi \otimes_{C(S^{n+1}_q)} [\iota^\#]$ is a $C_2$-equivariant morphism from $C(S^n_q)$ to itself.  The map on $K_*(C(S^n_q))$ induced by $\phi \otimes_{C(S^{n+1}_q)} [\iota^\#]$ factors through $K_*(C(S^{n+1}_q))$, which is trivial in the odd part.  Hence the graded trace of $\phi \otimes_{C(S^{n+1}_q)} [\iota^\#]$ on $K_*(C(S^n_q))$ becomes equal to $1$, which is a contradiction.

Next, suppose that $n$ is even, and let $\phi$ be a $C_2$-equivariant $\KK^{C_2}$-morphism from $C(S^n_q)$ to $C(S^{n+1}_q)$ which is unital on $K_0$.  Then the composition $[\iota^\#] \otimes_{C(S^{n}_q)} \phi$ is a $C_2$-equivariant morphism on $C(S^{n+1}_q)$.  Analogously to the above argument, we obtain a contradiction in this case as well.
\end{proof}

The following is an analogue of the Borsuk--Ulam theorem for quantum spheres.

\begin{cor}
 For any $0 < q \le 1$ and any integers $n < m$, there is no $C_2$-equivariant unital $*$-homomorphism from $C(S^n_q)$ to $C(S^m_q)$.
\end{cor}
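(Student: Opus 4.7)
The plan is to bootstrap Theorem~\ref{thm:quant-borsuk-ulam} by composing any hypothetical equivariant $*$-homomorphism with a chain of the surjections $\iota^\#$ recalled in Section~\ref{sec-borsuk-ulam-q-sphere}, which collapse $C(S^m_q)$ down to $C(S^{n+1}_q)$ step by step.

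First, suppose for contradiction that $\phi \colon C(S^n_q) \to C(S^m_q)$ is a $C_2$-equivariant unital $*$-homomorphism with $m > n$. Recall that for each $k \ge 1$ there is a $C_2$-equivariant surjective unital $*$-homomorphism $\iota^\# \colon C(S^{k+1}_q) \to C(S^k_q)$, obtained either as the quotient by the ideal generated by $z_{n+1}-z_{n+1}^*$ or by the defining surjection $C(S^{2n}_q) \to C(S^{2n-1}_q)$ of Hong--Szyma\'{n}ski. Iterating $\iota^\#$ exactly $m - n - 1$ times and postcomposing with $\phi$ yields a $C_2$-equivariant unital $*$-homomorphism
\[
\psi \colon C(S^n_q) \longrightarrow C(S^{n+1}_q).
\]

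Next, view $\psi$ as a class in $\KK^{C_2}(C(S^n_q), C(S^{n+1}_q))$. Since every factor in the composition is a unital $*$-homomorphism, the induced map on $K_0$ sends the class of the unit to the class of the unit, so $\psi$ is unital on $K_0$. Theorem~\ref{thm:quant-borsuk-ulam} asserts precisely that no such $\KK^{C_2}$-morphism exists, a contradiction. This immediately establishes the corollary.

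The only delicate point is to ensure that the chain of compositions stays in the class of morphisms forbidden by Theorem~\ref{thm:quant-borsuk-ulam}; this reduces to the trivial observation that a composition of unital $*$-homomorphisms is unital. Hence there is no real obstacle: the corollary is a direct consequence of the theorem together with the existence of the equatorial projections $\iota^\#$.
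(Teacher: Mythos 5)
Your proposal is correct and follows essentially the same route as the paper: compose the hypothetical map with the chain of equatorial surjections $\iota^\#$ to land in $C(S^{n+1}_q)$, observe the resulting class is unital on $K_0$, and invoke Theorem~\ref{thm:quant-borsuk-ulam}. The only difference is that you spell out the count $m-n-1$ of iterations and the unitality bookkeeping, which the paper leaves implicit.
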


\begin{proof}
We argue by contradiction.  Suppose that there is a $C_2$-equivariant unital $*$-homomorphism $\psi$ from $C(S^n_q)$ to $C(S^m_q)$.  Then, by composing with iterations of $\iota^\#$, we obtain a equivariant unital homomorphism $\phi$ from $C(S^n_q)$ to $C(S^{n+1}_q)$.  The class of $\phi$ in $\KK^{C_2}(C(S^n_q), C(S^{n+1}_q))$ induces a unital homomorphism on the $K_0$-groups, which contradicts the above theorem.
\end{proof}

\begin{rmk}
It also follows from Theorem~\ref{thm:quant-borsuk-ulam} that different dimensional quantum spheres cannot be $C_2$-equivariantly equivalent.  (This also follows from the explicit computation of $K_*(C(S^n_q)^{C_2})$ carried out in~\cite{MR1942860}.)  Indeed, when $\phi$ is a $\KK^{C_2}$-equivalence between $C(S^m_q)$ and $C(S^n_q)$, the parity of $m$ and $n$ has to be the same.  If $m$ is odd, clearly either $\phi$ or $-\phi$ has to be unital on $K_0(C(S^m_q)) = \Z [1]$.  When $m$ is even, using the extension~\cite{MR1942860}*{(5.1)} for $\iota^\#$, the subgroup $\Z[1]$ of $K_0(C(S^m_q))$ can be characterized as the trivial submodule of the induced action of $C_2$ on $K_0(C(S^m_q))$.  Thus $\phi$ or $-\phi$ has to be unital on $K_0$ in this case as well.
\end{rmk}

\begin{rmk}
A classical result of Gottlieb~\cite{MR687962} states that, when $M$ is a free $G$-manifold which is a homotopy retract of a finite CW-complex and $f$ is an equivariant self-continuous map on $M$, the Lefschetz number of $f$ is divisible by the order of $G$.  Proposition~\ref{prop:free-action-lefschetz-num} can be seen as a generalization of this to the equivariant $\KK$-morphisms in the compact manifold setting.  It would be interesting to know to which extent Proposition~\ref{prop:free-action-lefschetz-num} can be generalized.  For example, a result of Casson and Gottlieb~\cite{MR0436144}*{Theorem~4} establishes an analogous statement for equivariant self continuous maps of compact spaces of finite covering dimension endowed with a free action of a finite cyclic group.  This implies that there is no $C_2$-equivariant continuous map from the suspension $S X$ to $X$ when $X$ is such a free $C_2$-space.
\end{rmk}





\begin{bibdiv}
\begin{biblist}

\bib{MR0420729}{incollection}{
      author={Atiyah, M.~F.},
       title={Elliptic operators, discrete groups and von {N}eumann algebras},
        date={1976},
   booktitle={Colloque ``{A}nalyse et {T}opologie'' en l'{H}onneur de {H}enri
  {C}artan ({O}rsay, 1974)},
   publisher={Soc. Math. France},
     address={Paris},
       pages={43\ndash 72. Ast\'erisque, No. 32\ndash 33},
      review={\MR{0420729 (54 \#8741)}},
}

\bib{MR1235438}{article}{
      author={Baaj, Saad},
      author={Skandalis, Georges},
       title={Unitaires multiplicatifs et dualit\'e pour les produits crois\'es
  de {$C^*$}-alg\`ebres},
        date={1993},
        ISSN={0012-9593},
     journal={Ann. Sci. \'Ecole Norm. Sup. (4)},
      volume={26},
      number={4},
       pages={425\ndash 488},
         url={http://www.numdam.org/item?id=ASENS_1993_4_26_4_425_0},
      review={\MR{1235438 (94e:46127)}},
}

\bib{baum-hajac-galois-klein}{misc}{
      author={Baum, Paul},
      author={Hajac, Piotr~M.},
       title={The {G}alois and {$K$}-theory of the {K}lein-{P}odles bottle},
         how={in preparation},
}

\bib{MR1610242}{article}{
      author={Bauval, Anne},
       title={{$RKK(X)$}-nucl{\'e}arit{\'e} (d'apr{\`e}s {G}.\ {S}kandalis)},
        date={1998},
        ISSN={0920-3036},
     journal={$K$-Theory},
      volume={13},
      number={1},
       pages={23\ndash 40},
         url={http://dx.doi.org/10.1023/A:1007727426701},
         doi={10.1023/A:1007727426701},
      review={\MR{1610242 (99h:19007)}},
}

\bib{MR0436144}{article}{
      author={Casson, Andrew},
      author={Gottlieb, Daniel~Henry},
       title={Fibrations with compact fibres},
        date={1977},
        ISSN={0002-9327},
     journal={Amer. J. Math.},
      volume={99},
      number={1},
       pages={159\ndash 189},
      review={\MR{0436144 (55 \#9094)}},
}

\bib{MR2511635}{article}{
      author={Echterhoff, Siegfried},
      author={Nest, Ryszard},
      author={Oyono-Oyono, Herv{\'e}},
       title={Fibrations with noncommutative fibers},
        date={2009},
        ISSN={1661-6952},
     journal={J. Noncommut. Geom.},
      volume={3},
      number={3},
       pages={377\ndash 417},
         url={http://dx.doi.org/10.4171/JNCG/41},
         doi={10.4171/JNCG/41},
      review={\MR{2511635 (2010g:19004)}},
}

\bib{MR2797970}{article}{
      author={Emerson, Heath},
       title={Lefschetz numbers for {$C^*$}-algebras},
        date={2011},
        ISSN={0008-4395},
     journal={Canad. Math. Bull.},
      volume={54},
      number={1},
       pages={82\ndash 99},
         url={http://dx.doi.org/10.4153/CMB-2010-084-5},
         doi={10.4153/CMB-2010-084-5},
      review={\MR{2797970}},
}

\bib{MR687962}{article}{
      author={Gottlieb, Daniel~Henry},
       title={The {L}efschetz number and {B}orsuk-{U}lam theorems},
        date={1982},
        ISSN={0030-8730},
     journal={Pacific J. Math.},
      volume={103},
      number={1},
       pages={29\ndash 37},
         url={http://projecteuclid.org/getRecord?id=euclid.pjm/1102724213},
      review={\MR{687962 (84m:55003)}},
}

\bib{MR0478156}{incollection}{
      author={Hodgkin, Luke},
       title={The equivariant {K}\"unneth theorem in {$K$}-theory},
        date={1975},
   booktitle={Topics in {$K$}-theory. {T}wo independent contributions},
   publisher={Springer},
     address={Berlin},
       pages={1\ndash 101. Lecture Notes in Math., Vol. 496},
      review={\MR{0478156 (57 \#17645)}},
}

\bib{MR1942860}{article}{
      author={Hong, Jeong~Hee},
      author={Szyma{\'n}ski, Wojciech},
       title={Quantum spheres and projective spaces as graph algebras},
        date={2002},
        ISSN={0010-3616},
     journal={Comm. Math. Phys.},
      volume={232},
      number={1},
       pages={157\ndash 188},
         url={http://dx.doi.org/10.1007/s00220-002-0732-1},
         doi={10.1007/s00220-002-0732-1},
      review={\MR{1942860 (2003i:46080)}},
}

\bib{MR918241}{article}{
      author={Kasparov, G.~G.},
       title={Equivariant {${\it KK}$}-theory and the {N}ovikov conjecture},
        date={1988},
        ISSN={0020-9910},
     journal={Invent. Math.},
      volume={91},
      number={1},
       pages={147\ndash 201},
         url={http://dx.doi.org/10.1007/BF01404917},
         doi={10.1007/BF01404917},
      review={\MR{918241 (88j:58123)}},
}

\bib{MR1614943}{book}{
      author={Korogodski, Leonid~I.},
      author={Soibelman, Yan~S.},
       title={Algebras of functions on quantum groups. {P}art {I}},
      series={Mathematical Surveys and Monographs},
   publisher={American Mathematical Society},
     address={Providence, RI},
        date={1998},
      volume={56},
        ISBN={0-8218-0336-0},
      review={\MR{1614943 (99a:17022)}},
}

\bib{MR1726697}{article}{
      author={Lu, Jiang-Hua},
       title={Coordinates on {S}chubert cells, {K}ostant's harmonic forms, and
  the {B}ruhat {P}oisson structure on {$G/B$}},
        date={1999},
        ISSN={1083-4362},
     journal={Transform. Groups},
      volume={4},
      number={4},
       pages={355\ndash 374},
         url={http://dx.doi.org/10.1007/BF01238564},
         doi={10.1007/BF01238564},
      review={\MR{1726697 (2001k:22018)}},
}

\bib{MR1988723}{book}{
      author={Matou{\v{s}}ek, Ji{\v{r}}{\'{\i}}},
       title={Using the {B}orsuk-{U}lam theorem},
      series={Universitext},
   publisher={Springer-Verlag},
     address={Berlin},
        date={2003},
        ISBN={3-540-00362-2},
        note={Lectures on topological methods in combinatorics and geometry,
  Written in cooperation with Anders Bj{\"o}rner and G{\"u}nter M. Ziegler},
      review={\MR{1988723 (2004i:55001)}},
}

\bib{MR557175}{incollection}{
      author={McLeod, John},
       title={The {K}unneth formula in equivariant {$K$}-theory},
        date={1979},
   booktitle={Algebraic topology, {W}aterloo, 1978 ({P}roc. {C}onf., {U}niv.
  {W}aterloo, {W}aterloo, {O}nt., 1978)},
      series={Lecture Notes in Math.},
      volume={741},
   publisher={Springer},
     address={Berlin},
       pages={316\ndash 333},
      review={\MR{557175 (80m:55007)}},
}

\bib{MR2193334}{article}{
      author={Meyer, Ralf},
      author={Nest, Ryszard},
       title={The {B}aum-{C}onnes conjecture via localisation of categories},
        date={2006},
        ISSN={0040-9383},
     journal={Topology},
      volume={45},
      number={2},
       pages={209\ndash 259},
         url={http://dx.doi.org/10.1016/j.top.2005.07.001},
         doi={10.1016/j.top.2005.07.001},
      review={\MR{2193334 (2006k:19013)}},
}

\bib{MR2339371}{article}{
      author={Meyer, Ralf},
      author={Nest, Ryszard},
       title={An analogue of the {B}aum-{C}onnes isomorphism for coactions of
  compact groups},
        date={2007},
        ISSN={0025-5521},
     journal={Math. Scand.},
      volume={100},
      number={2},
       pages={301\ndash 316},
      review={\MR{2339371 (2008g:19005)}},
}

\bib{MR1601237}{incollection}{
      author={Nagy, Gabriel},
       title={Deformation quantization and {$K$}-theory},
        date={1998},
   booktitle={Perspectives on quantization ({S}outh {H}adley, {MA}, 1996)},
      series={Contemp. Math.},
      volume={214},
   publisher={Amer. Math. Soc.},
     address={Providence, RI},
       pages={111\ndash 134},
      review={\MR{1601237 (99b:46107)}},
}

\bib{arXiv:0711.4302}{misc}{
      author={Neshveyev, Sergey},
      author={Tuset, Lars},
       title={Notes on the {K}azhdan-{L}usztig theorem on equivalence of the
  {D}rinfeld category and the category of {$U_q(\mathfrak{g})$}-modules},
         how={preprint},
        date={2007},
      eprint={\href{http://arxiv.org/abs/0711.4302}{{\tt arXiv:0711.4302
  [math.QA]}}},
}

\bib{arXiv:1102.0248}{misc}{
      author={Neshveyev, Sergey},
      author={Tuset, Lars},
       title={{$K$}-homology class of the {D}irac operator on a compact quantum
  group},
         how={preprint},
        date={2011},
      eprint={\href{http://arxiv.org/abs/1102.0248}{{\tt arXiv:1102.0248
  [math.OA]}}},
}

\bib{arXiv:1103.4346}{misc}{
      author={Neshveyev, Sergey},
      author={Tuset, Lars},
       title={Quantized algebras of functions on homogeneous spaces with
  {P}oisson stabilizers},
         how={preprint},
        date={2011},
      eprint={\href{http://arxiv.org/abs/1103.4346}{{\tt arXiv:1103.4346
  [math.OA]}}},
}

\bib{MR2566309}{article}{
      author={Nest, Ryszard},
      author={Voigt, Christian},
       title={Equivariant {P}oincar\'e duality for quantum group actions},
        date={2010},
        ISSN={0022-1236},
     journal={J. Funct. Anal.},
      volume={258},
      number={5},
       pages={1466\ndash 1503},
      eprint={\href{http://arxiv.org/abs/0902.3987}{{\tt arXiv:0902.3987
  [math.KT]}}},
         url={http://dx.doi.org/10.1016/j.jfa.2009.10.015},
         doi={10.1016/j.jfa.2009.10.015},
      review={\MR{2566309}},
}

\bib{MR1426840}{incollection}{
      author={Pimsner, Michael~V.},
       title={A class of {$C^*$}-algebras generalizing both {C}untz-{K}rieger
  algebras and crossed products by {${\bf Z}$}},
        date={1997},
   booktitle={Free probability theory ({W}aterloo, {ON}, 1995)},
      series={Fields Inst. Commun.},
      volume={12},
   publisher={Amer. Math. Soc.},
     address={Providence, RI},
       pages={189\ndash 212},
      review={\MR{1426840 (97k:46069)}},
}

\bib{MR919322}{article}{
      author={Podle{\'s}, P.},
       title={Quantum spheres},
        date={1987},
        ISSN={0377-9017},
     journal={Lett. Math. Phys.},
      volume={14},
      number={3},
       pages={193\ndash 202},
         doi={10.1007/BF00416848},
      review={\MR{919322 (89b:46081)}},
}

\bib{MR1015339}{article}{
      author={Reshetikhin, N.~{\relax Yu}.},
      author={Takhtadzhyan, L.~A.},
      author={Faddeev, L.~D.},
       title={Quantization of {L}ie groups and {L}ie algebras},
        date={1989},
        ISSN={0234-0852},
     journal={Algebra i Analiz},
      volume={1},
      number={1},
       pages={178\ndash 206},
        note={Translation in Leningrad Math. J. {\bf 1} (1990), no. 1,
  193---225},
      review={\MR{1015339 (90j:17039)}},
}

\bib{MR849938}{article}{
      author={Rosenberg, Jonathan},
      author={Schochet, Claude},
       title={The {K}{\"u}nneth theorem and the universal coefficient theorem
  for equivariant {$K$}-theory and {${\it KK}$}-theory},
        date={1986},
        ISSN={0065-9266},
     journal={Mem. Amer. Math. Soc.},
      volume={62},
      number={348},
       pages={vi+95},
      review={\MR{849938 (87k:46147)}},
}

\bib{MR0309115}{article}{
      author={Snaith, V.~P.},
       title={On the {K}unneth formula spectral sequence in equivariant
  {$K$}-theory},
        date={1972},
     journal={Proc. Cambridge Philos. Soc.},
      volume={72},
       pages={167\ndash 177},
      review={\MR{0309115 (46 \#8226)}},
}

\bib{MR801938}{incollection}{
      author={Steinlein, H.},
       title={Borsuk's antipodal theorem and its generalizations and
  applications: a survey},
        date={1985},
   booktitle={M\'ethodes topologiques en analyse non lin\'eaire},
      series={S\'em. Math. Sup.},
      volume={95},
   publisher={Presses Univ. Montr\'eal},
     address={Montreal, QC},
       pages={166\ndash 235},
      review={\MR{801938 (86k:55002)}},
}

\bib{MR2182592}{article}{
      author={Vaes, Stefaan},
       title={A new approach to induction and imprimitivity results},
        date={2005},
        ISSN={0022-1236},
     journal={J. Funct. Anal.},
      volume={229},
      number={2},
       pages={317\ndash 374},
      eprint={\href{http://arxiv.org/abs/math/0407525}{{\tt arXiv:math/0407525
  [math.OA]}}},
         doi={10.1016/j.jfa.2004.11.016},
      review={\MR{2182592 (2007f:46065)}},
}

\bib{MR1086447}{article}{
      author={Vaksman, L.~L.},
      author={So{\u\i}bel{\cprime}man, Ya.~S.},
       title={Algebra of functions on the quantum group {${\rm SU}(n+1),$} and
  odd-dimensional quantum spheres},
        date={1990},
        ISSN={0234-0852},
     journal={Algebra i Analiz},
      volume={2},
      number={5},
       pages={101\ndash 120},
        note={Translation in Leningrad Math. J. {\bf 2} (1991), no. 5,
  1023---1042},
      review={\MR{1086447 (92e:58021)}},
}

\bib{vergnioux-thesis}{thesis}{
      author={Vergnioux, Roland},
       title={{${\it KK}$}-th{\'e}orie {\'e}quivariante et op{\'e}ratour de
  {J}ulg-{V}alette pour les groupes quantiques},
        type={Ph.D. Thesis},
      school={Universit{\'e} Paris Diderot-Paris 7},
        date={2002},
}

\bib{arXiv:0911.2999}{misc}{
      author={Voigt, Christian},
       title={The {B}aum-{C}onnes conjecture for free orthogonal quantum
  groups},
         how={preprint},
        date={2009},
      eprint={\href{http://arxiv.org/abs/0911.2999}{{\tt arXiv:0911.2999
  [math.OA]}}},
}

\bib{MR901157}{article}{
      author={Woronowicz, S.~L.},
       title={Compact matrix pseudogroups},
        date={1987},
        ISSN={0010-3616},
     journal={Comm. Math. Phys.},
      volume={111},
      number={4},
       pages={613\ndash 665},
         url={http://projecteuclid.org/getRecord?id=euclid.cmp/1104159726},
      review={\MR{901157 (88m:46079)}},
}

\end{biblist}
\end{bibdiv}

\end{document}